\numberwithin{equation}{section}
\newtheorem{theorem}{Theorem}[section]
\newtheorem{lemma}{Lemma}[section]
\newtheorem{follow}{Corollary}[section]
\newtheorem{pr}{Proposition}[section]
\theoremstyle{definition}
\newcommand{\bel}{\begin{equation} \label}
\newcommand{\ee}{\end{equation}}
\newcommand{\one}{\mathds{1}}
\newcommand{\Op}{{\rm Op}^{\rm w}}
\newcommand{\Oph}{{\rm Op}^{\rm w}_\hbar}
\newcommand{\lo}{\ell_+}
\newcommand{\Z}{{\mathbb Z}}
\newcommand{\R}{{\mathbb R}}
\newcommand{\N}{{\mathbb N}}
\newcommand{\C}{{\mathbb C}}
\newcommand{\re}{{\mathbb R}}
\newcommand{\rd}{{\mathbb R}^{2}}
\newcommand{\tr}{{\rm Tr}}
\newcommand{\V}{{\mathbb V}}
\newcommand{\uo}{\mathaccent'27{u}}
\newcommand{\vo}{\mathaccent'27{\mathbb V}}
\begin{document}
\begin{center}{\Large \bf A Trace Formula for Long-Range Perturbations of the Landau Hamiltonian}

\medskip

{\sc Tom\'as Lungenstrass, Georgi Raikov}

\medskip
\today
\end{center}

\bigskip

\noindent
{\bf Abstract.} {\small We consider the Landau Hamiltonian  perturbed by a long--range electric
potential $V$. The spectrum of the perturbed operator consists of eigenvalue clusters which accumulate to the Landau levels. First, we estimate the rate of the shrinking of these clusters to the Landau levels as the number of the cluster  tends to infinity. Further, we assume that there exists an appropriate $\V$, homogeneous of order $-\rho$ with $\rho \in (0,1)$,  such that $V(x) = \V(x) + O(|x|^{-\rho - \varepsilon})$, $\varepsilon > 0$, as $|x| \to \infty$, and investigate the asymptotic distribution of the eigenvalues within the $q$th cluster as $q \to \infty$.  We obtain an
explicit description of the asymptotic density of the eigenvalues
in terms of the mean--value transform of  $\V$.}\\

{\bf Keywords}: Landau Hamiltonian, long-range perturbations, asymptotic density for eigenvalue clusters,
mean-value transform\\

{\bf  2010 AMS Mathematics Subject Classification}:  35P20, 35J10,
 47G30, 81Q10\\

\section{Introduction} \label{s1}
Our unperturbed operator is the Landau Hamiltonian
$$
H_0 : = (-i\nabla - A)^2,
$$
self--adjoint in $L^2(\rd)$. Here, $A: = \left(-\frac{Bx_2}{2}, \frac{Bx_1}{2}\right)$ is the magnetic potential, and $B>0$ is
 the  generated constant magnetic field. It is well known that the spectrum $\sigma(H_0)$ of $H_0$ consists of infinitely degenerate eigenvalues
 $\lambda_q : = B(2q+1)$, $q \in {\mathbb Z}_+ : = \{0,1,2,\ldots\}$, called {\em Landau levels}.\\
The perturbation of $H_0$ is an electric potential $V : \rd \to \re$ which is supposed to be bounded and continuous. Set $H : = H_0 + V$. Evidently,
$$
\sigma(H) \subset \bigcup_{q=0}^\infty \left[\lambda_q + \inf{V}, \lambda_q + \sup{V}\right].
$$
Moreover, if $V$ decays at infinity, and, hence, is relatively compact with respect to $H_0$, then $\sigma(H)\setminus \sigma(H_0)$ consists of discrete eigenvalues which could accumulate only to the Landau levels. Recently, in \cite{prvb} it was shown that if $V$ satisfies
\bel{25}
    |V(x)| \leq c \langle x \rangle^{-\rho}, \quad x \in \rd,
    \ee
    with $\rho > 1$, then $\sigma(H)$ is contained in the union of intervals centered at the Landau levels $\lambda_q$, of size $O(\lambda_q^{-1/2})$ as $q \to \infty$. Moreover, in \cite{prvb} the asymptotic density of the eigenvalue clusters was studied. To this end, the asymptotic behaviour of the trace $\tr\,\varphi(\lambda_q^{1/2}(H - \lambda_q))$ with $\varphi \in C_0^\infty(\re\setminus\{0\})$ was investigated, and it was found that $\tr\,\varphi(\lambda_q^{1/2}(H - \lambda_q))$ is of order $\lambda_q^{1/2}$ as $q \to \infty$, and its first asymptotic term could be written explicitly using the Radon transform of $V$. \\
    \noindent
    In the present article we assume that $V$ is long--range, i.e. in contrast to \cite{prvb}, it satisfies \eqref{25} with $\rho \in (0,1)$. First, we show that the eigenvalue clusters of $H$ shrink to the Landau levels at rate $\lambda_q^{-\rho/2}$ as $q \to \infty$ (see Proposition \ref{p1}). Further, we suppose that there exists an appropriate $\V$, homogeneous of order $-\rho$, which is asymptotically equivalent to $V$, and study the asymptotic behaviour of the trace $\tr\,\varphi(\lambda_q^{\rho/2}(H - \lambda_q))$. We show that $\tr\,\varphi(\lambda_q^{\rho/2}(H - \lambda_q))$ is of order $\lambda_q$ as $q \to \infty$, and its main asymptotic term could be written explicitly using the mean--value transform of $\V$ (see Theorem \ref{th1}). \\
    \noindent
    The article is organized as follows. In the next section we formulate our main results, and briefly comment on them. Section \ref{s3} contains auxiliary facts concerning the properties of Weyl pseudodifferential operators and Berezin--Toeplitz operators which are the main tools in the proof of Theorem \ref{th1}. The proof itself could be found in Section \ref{s5}, and is divided into several steps, contained in separate subsections.

    \section{Main results} \label{s2}
    Our first result concerns the shrinking of the eigenvalue clusters of $H$ in the case of long--range potentials $V$.
\begin{pr} \label{p1}
Assume that $V$ satisfies \eqref{25}
    with $\rho \in (0,1)$. Then there exists a constant $C>0$ such that
    \bel{23}
    \sigma(H) \subset \bigcup_{q=0}^\infty \left(\lambda_q - C\lambda_q^{-\rho/2}, \lambda_q + C\lambda_q^{-\rho/2}\right).
    \ee
\end{pr}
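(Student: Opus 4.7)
The plan is to invoke a Feshbach--Schur reduction with respect to the spectral projection $P_q$ of $H_0$ onto the $q$-th Landau eigenspace. More precisely, I would show that for a sufficiently large constant $C>0$ and all sufficiently large $q$, the resolvent $(H-z)^{-1}$ exists whenever $z = \lambda_q + E$ with $C\lambda_q^{-\rho/2} < |E| < B$; this yields the inclusion \eqref{23} for large $q$, and the finitely many remaining small values of $q$ are absorbed into $C$ via the trivial inclusion $\sigma(H) \subset \bigcup_{q\geq 0} [\lambda_q - \|V\|_\infty,\,\lambda_q + \|V\|_\infty]$ coming from boundedness of $V$.

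The analytic heart of the argument is the pair of operator-norm estimates
\[
\|P_q V P_q\| \leq C_1\,\lambda_q^{-\rho/2}, \qquad \|V P_q\|^2 = \|P_q V^2 P_q\| \leq C_1\,\lambda_q^{-\rho}.
\]
Both rest on the same coherent-state picture: a state of the $q$-th Landau level with guiding centre at $z \in \rd$ is concentrated along the cyclotron orbit of radius $\sqrt{2q/B}\sim\lambda_q^{1/2}$, so that the integral of $V(x)^2 \leq c^2\langle x\rangle^{-2\rho}$ against its density is controlled by $\bigl(\max(|z|,\lambda_q^{1/2})\bigr)^{-2\rho}$, whose supremum over $z$ is $O(\lambda_q^{-\rho})$. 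The Berezin--Toeplitz machinery recorded in Section~\ref{s3} provides the tools to make this heuristic rigorous; it is essentially the same mechanism that will drive Theorem~\ref{th1}.

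Given these bounds, the Feshbach step is routine. Setting $Q_q := I - P_q$ and using $P_q H_0 Q_q = 0$, decompose
\[
H - z = \begin{pmatrix} -E\,I + P_q V P_q & P_q V Q_q \\ Q_q V P_q & Q_q(H_0 - z)Q_q + Q_q V Q_q \end{pmatrix}
\]
on $L^2(\rd) = \mathrm{Ran}\,P_q \oplus \mathrm{Ran}\,Q_q$. Since $\|(Q_q(H_0-z)Q_q)^{-1}\| \leq 2/B$ on $\mathrm{Ran}\,Q_q$ for $|E|\leq B/2$, and $\|Q_q V Q_q\|\leq\|V\|_\infty$, a Neumann series (valid for $q$ larger than a $\|V\|_\infty$-dependent threshold) shows the $(2,2)$-block is invertible with a bound independent of $q$ and $E$. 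The Feshbach complement on $\mathrm{Ran}\,P_q$ is then
\[
F_q(z) = -E\,I + P_q V P_q - P_q V Q_q\bigl(Q_q(H-z)Q_q\bigr)^{-1}Q_q V P_q,
\]
and combining the key estimate with $\|P_q V Q_q\|^2 = \|P_qVQ_qVP_q\|\leq\|P_qV^2P_q\|$ forces $\|F_q(z) + E\,I\| \leq C_1\lambda_q^{-\rho/2} + O(\lambda_q^{-\rho})$. Hence $F_q(z)$, and therefore $H - z$, is invertible as soon as $|E| > C\lambda_q^{-\rho/2}$ for $C$ large enough.

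The main obstacle is the sharp Berezin--Toeplitz bound on $P_q V^2 P_q$: the entire quantitative content of the proposition is compressed into it, and it is subtle precisely because $V$ decays only algebraically at infinity, so that one cannot simply exploit compact support and must genuinely play the cyclotron radius $\lambda_q^{1/2}$ against the weight $\langle x\rangle^{-\rho}$. By contrast, the Feshbach gap argument and the handling of the small-$q$ regime are standard, relying only on the equal spacing of the Landau levels and the boundedness of $V$.
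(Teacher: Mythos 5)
Your Feshbach--Schur argument is a genuinely different route from the paper's Birman--Schwinger proof (the paper writes $R_0(\lambda)=\sum_{|k-s|\le m}(\lambda_k-\lambda)^{-1}P_k+\tilde R_0$ with $m$ fixed so that $\lambda_m\ge 2\|V\|_\infty$, then shows $\||V|^{1/2}R_0(\lambda)V^{1/2}\|<1$ using only $\|P_k|V|P_k\|$). As written, however, your version has a real gap in the $(2,2)$-block: with $Q_q=I-P_q$, the Neumann series for $\bigl(Q_q(H_0-z)Q_q+Q_qVQ_q\bigr)^{-1}$ converges only when $\|V\|_\infty<2B-|E|$, and this threshold is independent of $q$. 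Your remark that a ``$\|V\|_\infty$-dependent threshold'' on $q$ fixes this does not hold: $\|Q_qVQ_q\|$ does not decay as $q\to\infty$, because $Q_q$ still carries all the low Landau levels, where $\|P_sVP_s\|$ need not be small. The standard repair is to project out the whole band $\{s:|s-q|\le m\}$ with $\lambda_m\ge 2\|V\|_\infty$ --- exactly the paper's choice of $m$. That enlarges the Feshbach complement to a $(2m+1)\times(2m+1)$ block, and then you must bound the off-diagonal pieces $P_sVP_{s'}$ as well, not just $P_qV^2P_q$; at that point the Birman--Schwinger sandwich, which reduces everything to $\|P_k|V|P_k\|$ by $\||V|^{1/2}P_kV^{1/2}\|\le\|P_k|V|P_k\|$, is the cleaner route.

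A second, more minor issue: the asserted bound $\|P_qV^2P_q\|\le C_1\lambda_q^{-\rho}$ is false for $\rho\ge\tfrac12$. Since $V^2$ decays as $\langle x\rangle^{-2\rho}$, Proposition~\ref{p2} applied with exponent $2\rho$ gives $O(\lambda_q^{-1/2})$ for $\rho>\tfrac12$ (with a logarithm at $\rho=\tfrac12$), which is \emph{slower} than $\lambda_q^{-\rho}$ there. Your coherent-state heuristic misses the dominant contribution from guiding centres with $|z|\approx\lambda_q^{1/2}$, for which the cyclotron orbit passes through the origin; this is precisely what produces the crossover at exponent~$1$ visible in Proposition~\ref{p4}. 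What your argument actually needs is only $\|P_qV^2P_q\|=o(\lambda_q^{-\rho/2})$, which does hold for every $\rho\in(0,1)$, so this slip is repairable --- but the stated power law would not survive scrutiny, and the paper sidesteps the $V^2$ question entirely by never leaving the sandwich $\|P_k|V|P_k\|$.
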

\noindent
The proof of Proposition \ref{p1} could be found in Subsection \ref{ss41}.\\
\noindent
{\em Remarks}: (i) Simple considerations (see the remark after Proposition \ref{p2}) show that the estimate $O(\lambda_q^{-\rho/2})$ of the size of the eigenvalue clusters is sharp. This will follow also from Theorem \ref{th1}. \\
\noindent
(ii) In \cite[Proposition 1.1]{prvb} it was shown that if $V$ satisfies \eqref{25} with $\rho > 1$, then there exists a constant $C>0$ such that
$$
\sigma(H) \subset \bigcup_{q=0}^\infty \left(\lambda_q - C\lambda_q^{-1/2}, \lambda_q + C\lambda_q^{-1/2}\right).
$$
In the case of compactly supported $V$, such a result was already obtained in \cite{korpu}. \\

\noindent
In order to formulate our main result
we need the following notations. For $d \geq 1$ put
$$
C_{\bf b}^\infty(\re^d) = \left\{u \in C^{\infty}(\re^d) \, | \, \sup_{x \in \re} | D^{\alpha}u(x)| \leq c_\alpha, \; \alpha \in {\mathbb Z}_+^d \right\}.
$$
Following \cite[Chapter 3, Section 8]{tay1}, we write $u \in {\mathcal H}_{-\rho}^\sharp(\re^d)$ if $u \in C^{\infty}(\re^d\setminus\{0\})$, $\rho \in (0,\infty)$, is  a homogeneous function of order $-\rho$. Moreover, for $\rho \in [0,\infty)$ we set
$$
{\mathcal S}^{-\rho}_1(\re^d)  : = \left\{u \in C^{\infty}(\re^d) \, | \, \sup_{x \in \re^d}\langle x \rangle^{\rho +|\alpha|} |D^{\alpha}u(x)| \leq c_\alpha, \; \alpha \in {\mathbb Z}_+^d\right\}.
$$
Assume $u \in C(\rd \setminus \{0\})$, and define {\em the mean--value transform}
$$
\uo(x) : = \frac{1}{2\pi} \int_{{\mathbb S}^1} u(x-\omega) d\omega, \quad x \in \rd \setminus {\mathbb S}^1.
$$
Our mean--value transform coincides with the 2D mean--value operator $M^1$ defined in  \cite[Chapter I, Eq. (15)]{helg1} with $n=2$, and is quite closely related to the so--called planar circular Radon transform defined, for instance, in \cite{amku}.  \\
Next, we describe some elementary but yet useful properties of the mean--value transforms of functions from appropriate classes. The proofs are quite simple, so that we omit the details.
 If $u \in {\mathcal S}_1^{-\rho}(\rd)$, $\rho \in (0,\infty)$, then the mean-value transform $\uo$ extends to a function $\uo \in {\mathcal S}_1^{-\rho}(\rd)$.
If $u \in {\mathcal H}_{-\rho}^\sharp(\rd)$, $\rho \in (0, \infty)$, then $\eta \uo \in {\mathcal S}_1^{-\rho}$ provided that $\eta \in {\mathcal S}_1^0(\rd)$ and ${\rm supp}\, \eta \cap {\mathbb S}^1 = \emptyset$.
Moreover, if $\rho \in (0,1)$, then the mean-value transform of $u \in {\mathcal H}_{-\rho}^\sharp(\rd)$, $\rho \in (0, 1)$, extends to a function $\uo \in C(\rd)$. Finally, if $u \in {\mathcal H}_{-\rho}^\sharp(\rd)$, $\rho \in (0, 1)$, and $\uo(x) = 0$ for each $x \in \rd$, then $u(x) = 0$ for each $x \in \rd \setminus \{0\}$.
\begin{theorem} \label{th1} Let $\rho \in (0,1)$. Assume that $V \in {\mathcal S}_1^{-\rho}(\rd)$ and there exists $\V \in {\mathcal H}_{-\rho}^\sharp(\rd)$ such that
    \bel{22}
    |V(x) - \V(x)| \leq C |x|^{-\rho - \varepsilon}, \quad x \in \rd, \quad |x| > 1,
    \ee
    with some constant $C$  and $\varepsilon > 0$.
     Then we have
     \bel{24}
     \lim_{q \to \infty} \lambda_q^{-1} {\rm Tr}\, \varphi(\lambda_q^{\rho/2} (H - \lambda_q)) = \frac{1}{2\pi B} \int_{\rd} \varphi(B^{\rho} \vo(x)) dx
     \ee
     for each $\varphi \in C_0^{\infty}(\re\setminus\{0\})$.
     \end{theorem}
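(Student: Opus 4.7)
My plan is to follow the three-step scheme of \cite{prvb}, adapted to the long-range regime in which the mean-value transform $\vo$ plays the role that the Radon transform of $V$ plays in the short-range case.

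\emph{Step 1: Reduction to a Berezin--Toeplitz trace.} Let $P_q$ denote the spectral projection of $H_0$ onto the $q$-th Landau level. The first task is to establish
$$\tr\,\varphi\bigl(\lambda_q^{\rho/2}(H-\lambda_q)\bigr)=\tr\,\varphi\bigl(\lambda_q^{\rho/2}P_qVP_q\bigr)+o(\lambda_q),\qquad q\to\infty.$$
I would combine the Helffer--Sj\"ostrand representation of $\varphi(\lambda_q^{\rho/2}(\cdot-\lambda_q))$ with the second resolvent identity $(z-H)^{-1}-(z-H_0)^{-1}=(z-H_0)^{-1}V(z-H)^{-1}$. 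The off-diagonal contributions $P_{q'}VP_q$ with $q'\neq q$ are damped by the constant spectral gap $2B|q-q'|$, while $\mathrm{supp}\,\varphi$ confines the spectrum to a window of width $O(\lambda_q^{-\rho/2})$ around $\lambda_q$; together these yield an error of order $\lambda_q^{1-\rho/2}=o(\lambda_q)$.

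\emph{Steps 2--3: Replacement and Toeplitz asymptotics via rescaling.} Set $y=Bx/\sqrt{\lambda_q}$. Homogeneity of $\V$ of order $-\rho$ gives $\lambda_q^{\rho/2}\V(x)=B^\rho\V(y)$, and the Jacobian $dx=(\lambda_q/B^2)\,dy$ accounts for the prefactor $\lambda_q/(2\pi B)$. Via the magnetic Weyl and Berezin--Toeplitz calculus of Section \ref{s3}, $P_q\V P_q$ is represented in the rescaled variables as a pseudodifferential operator whose principal symbol in the $q\to\infty$ limit is the average of $\V$ over the classical cyclotron orbit---a circle of radius $1$ in $y$-coordinates. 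This average is exactly the mean-value transform $\vo$, so a Szeg\H{o}-type asymptotic trace formula yields
$$\lim_{q\to\infty}\lambda_q^{-1}\tr\,\varphi\bigl(\lambda_q^{\rho/2}P_q\V P_q\bigr)=\frac{1}{2\pi B}\int_{\rd}\varphi\bigl(B^\rho\vo(y)\bigr)\,dy.$$
The replacement of $V$ by $\V$ is harmless: by \eqref{22} the cyclotron average of $V-\V$ is $O(\lambda_q^{-(\rho+\varepsilon)/2})$, so $\lambda_q^{\rho/2}$ times this vanishes uniformly and the corresponding spectral contribution leaves $\mathrm{supp}\,\varphi$ in the limit.

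\emph{Main obstacle.} The delicate step is Step 3: $\V$ and $\vo$ are smooth only on $\rd\setminus\{0\}$ and $\rd\setminus\mathbb S^1$ respectively (by the properties of the mean-value transform listed in Section \ref{s2}), so the symbolic calculus applies only away from these sets. The argument must be organised via a tripartite smooth partition of unity in $y$ separating $\{|y|\leq\delta\}$, $\{|y|\in(1-\delta,1+\delta)\}$, and the complement; on the complement the calculus produces the stated asymptotics, while the two bad regions have measure $O(\delta)$ and contribute $O(\delta\lambda_q)$ to the trace, which is negligible after letting $\delta\to 0$. The hypothesis $0\notin\mathrm{supp}\,\varphi$ is also essential: it ensures that $\{y:B^\rho\vo(y)\in\mathrm{supp}\,\varphi\}$ is bounded (since $\vo(y)\to 0$ as $|y|\to\infty$ by the homogeneity class of $\V$), which keeps the integral on the right-hand side of \eqref{24} finite and localises all estimates to a compact subset of $\rd$.
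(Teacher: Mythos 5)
Your high-level scheme (reduce to a Berezin--Toeplitz trace, then do semiclassical analysis with a rescaling, with separate treatment of the singularities at $0$ and $\mathbb{S}^1$) matches the paper's strategy, but there are two concrete gaps that would sink the argument as written.

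First, in Step 1 a single application of the second resolvent identity does not work in the long-range regime. For $\rho\in(0,1)$ the operator $(H_0-z)^{-1}V$ does not belong to any Schatten class $S_p$ with $p<2/\rho$, so the remainder $(H_0-z)^{-1}V(H-z)^{-1}$ is not trace class and the Helffer--Sj\"ostrand integral of it makes no sense as a trace. The paper iterates the resolvent identity $\lo$ times, where $\lo$ is the smallest integer strictly greater than $2/\rho$, precisely so that $((H_0-z)^{-1}V)^{\lo}$ lands in $S_1$; see \eqref{a5}. With this iteration the error analysis becomes a bookkeeping of terms $T_{1,m}$, and the bound is \emph{not} the naive $\lambda_q^{1-\rho/2}$ you posit: one gets $O(\lambda_q^{m\rho/2})$ for each $m\leq\lo-1$, and there is a genuinely borderline term (when $2/\rho\in\N$ and $m=\lo-1$) that requires the extra a priori Schatten bound \eqref{32} on $P_qVP_q$ to close. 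The paper also first passes through the Weinstein average $\langle H\rangle=H_0+\langle V\rangle$ (Prop.\ \ref{p7}) and only then isolates $P_qVP_q$ (Prop.\ \ref{p8}), which makes this bookkeeping tractable; your compression of the two steps is not obviously licit.

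Second, in Steps 2--3 the identification of the ``principal symbol'' with the orbital average $\vo$ is stated heuristically but the quantitative lemma that actually produces the mean-value transform is missing from your proposal. The paper relies on Proposition \ref{pr50}, which realises $P_qVP_q$ as $\Op(V_B*\Psi_q)$ (a convolution with a Laguerre-type Wigner function, not a circle average), and then on the key estimate \eqref{34} of Proposition \ref{p5}, which shows $\|\Op(V_B*\Psi_q)-\Op(V_B*\delta_{\sqrt{2q+1}})\|_2=O(\lambda_q^{-3/4})$. It is this Hilbert--Schmidt estimate, obtained from Bessel--Laguerre asymptotics, that converts the spread-out $\Psi_q$-average into the sharp circle average $\delta_{\sqrt{2q+1}}$, which is what ultimately yields $\vo$ after the rescaling. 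Without it your Step 3 is an appeal to intuition, not a proof, and in particular the control of the bad annulus near $\mathbb{S}^1$ cannot be done just by saying its measure is $O(\delta)$: one needs the weak Schatten class machinery of Lemma \ref{gl1}(iii) and the $L^{2/(2-\rho)}_w$ bound on $\hat{\V}$ to conclude the cut-off contribution is uniformly $O(\hbar^{-1}r^{1-\rho})$, cf.\ \eqref{g29}--\eqref{g33}.
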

     Let us comment briefly on Theorem \ref{th1}.
     \begin{itemize}
     \item Let $[\alpha, \beta] \subset \re \setminus \{0\}$ be a bounded interval with $\alpha < \beta$. For $q \in \Z_+$ set
     $$
     \mu_q([\alpha, \beta]) : = \sum_{\lambda_q + \alpha \lambda_q^{-\rho/2} \leq \lambda \leq \lambda_q + \beta \lambda_q^{-\rho/2}} {\rm dim \, Ker}\,(H-\lambda).
     $$
     Evidently, $\mu_q([\alpha, \beta]) < \infty$ if $q \in \Z_+$ is large enough. Put
     $$
      \mu([\alpha, \beta]) : = \frac{1}{2\pi B} \left| \left\{x \in \rd \, | \, \alpha B^{-\rho} \leq \vo(x) \leq \beta B^{-\rho}\right\}\right|
      $$
      where $| \cdot |$ denotes the Lebesgue measure, and $\vo$ is the mean-value transform of $\V \in {\mathcal H}_{-\rho}^{\sharp}(\rd)$, $\rho \in (0,1)$, the homogeneous function introduced in the statement of Theorem \ref{th1}. Evidently, $0 \not \in [\alpha, \beta]$ implies $\mu([\alpha, \beta]) < \infty$. We extend $\mu$ to a $\sigma$-finite measure defined on the Borel sets ${\mathcal O} \subset \re \setminus \{0\}$, and supported on $\left[B^\rho \inf_{x \in \re^2} \vo(x), B^\rho \sup_{x \in \re^2} \vo(x)\right] \setminus \{0\}$; the compactness of the support of the limiting measure $\mu$ agrees with the fact that, in accordance, with \eqref{23}, we have
      ${\rm supp}\,\mu_q \subset [-C,C] \setminus \{0\}$ for sufficiently large $q$.
      Then
      the validity of \eqref{24} for any $\varphi \in C_0^{\infty}(\rd \setminus \{0\})$ is equivalent to the validity of
      $$
      \lim_{q \to \infty} \lambda_q^{-1} \mu_q([\alpha, \beta]) = \mu([\alpha, \beta])
      $$
      for any bounded $[\alpha, \beta] \subset \re \setminus \{0\}$ such that $\mu(\{\alpha\}) = \mu(\{\beta\}) = 0$. Note that if, for instance, the function $\V$ is radially symmetric, then $\mu(\{\alpha\}) = 0$ for any $\alpha \in \re \setminus \{0\}$.
      \item As already mentioned, in \cite{prvb} it was supposed  that $V$ satisfies \eqref{25} with $\rho > 1$. Then the Radon transform
      $$
      \tilde{V}(s,\omega) : = \frac{1}{2\pi} \int_\re V(s\omega + t \omega^\perp) dt, \quad s \in \re, \quad \omega = (\omega_1, \omega_2) \in {\mathbb S}^1, \quad \omega^{\perp} : = (-\omega_2, \omega_1),
       $$
       is well defined, continuous, and decays as $|s| \to \infty$ uniformly with respect to $\omega \in {\mathbb S}^1$. Then, instead of \eqref{24}, we have
       \bel{jun1}
       \lim_{q \to \infty} \lambda_q^{-1/2} {\rm Tr}\, \varphi(\lambda_q^{1/2} (H - \lambda_q)) = \frac{1}{2\pi} \int_{\re} \int_{{\mathbb S}^1} \varphi(B  \tilde{V}(s,\omega)) d\omega ds
     \ee
     (see \cite[Theorem 1.3]{prvb}). Note, in particular, that if $V$ satisfies \eqref{25} with $\rho > 1$, then \eqref{jun1} implies that ${\rm Tr}\, \varphi(\lambda_q^{1/2} (H - \lambda_q))$ is of order $\lambda_q^{1/2}$, while it follows from \eqref{24} that under the hypotheses of Theorem \ref{th1} ${\rm Tr}\, \varphi(\lambda_q^{\rho/2} (H - \lambda_q))$ is of order $\lambda_q$ as $q \to \infty$.
     \item Theorem \ref{th1} admits a similar semiclassical interpretation as \cite[Theorem 1.3]{prvb}. Namely, consider the classical Hamiltonian function
    \bel{gr1}
{\mathcal H}(\xi,x) = (\xi_1+ B x_2/2)^2+(\xi_2 -
Bx_1/2)^2, \; \xi  = (\xi_1,\xi_2) \in \rd, \; x  = (x_1,x_2) \in
\rd.
    \ee
 The projections onto the configuration space of the orbits
of the Hamiltonian flow of ${\mathcal H}$ are circles of radius
$\sqrt{E}/B$, where $E>0$ is the energy corresponding
to the orbit. The classical particles move around these circles
with  period $T_B = \pi/B$. These orbits are
parameterized by the energy $E>0$ and the center ${\bf
c}\in\re^2$ of the circle. Denote the path in the
configuration space corresponding to such an orbit by
$\gamma({\bf c},E,t)$, $t\in[0,T_B)$, and set
$$
{\rm Av}(V) ({\bf c},E)  = \frac{1}{T_B} \int_0^{T_B}
V(\gamma({\bf c},E,t))dt, \quad T_B=\pi/B.
$$
 It is easy to see that the r.h.s. of \eqref{24} can be
rewritten as
\bel{pa2}
\frac{1}{2\pi B} \int_{\rd} \varphi(B^{\rho} \vo(x)) dx = \frac{1}{2\pi}
\lim_{E\to\infty} \frac{1}{E}\int_{\re^2} \varphi(E^{\rho/2}\,
{\rm Av}(V) ({\bf c},E))  \, B\, d{\bf c}.
\ee
Given \eqref{pa2}, we can rewrite \eqref{24} as
\bel{pa3}
\lim_{q \to \infty} \frac{1}{\lambda_q} \tr \,\varphi(
\lambda_q^{\rho/2} (H-\lambda_q)) = \frac1{2\pi} \lim_{E\to\infty}
\frac1{E}\int_{\re^2} \varphi(E^{\rho/2}\, {\rm Av}(V)
({\bf c},E) ) \, B\, d{\bf c}.
\ee
Formula \eqref{pa3} agrees with
the so--called  ``averaging principle'' for systems
close to integrable ones, according to which  a good
approximation is obtained if one replaces the original
perturbation by its average  along the orbits of the free dynamics (see e.g. \cite[Section 52]{arn}).
\item Neither Theorem \ref{th1}, nor \cite[Theorem 1.3]{prvb}, treat the border-line case $\rho =1$, i.e. the case where $V$ is, say, asymptotically homogeneous of order $-1$. In this case the Radon transform of $V$ is not well defined, while the mean-value transform $\vo$ of $\V \in {\mathcal H}_{-1}^{\sharp}(\rd)$ generically is not bounded since it may have a logarithmic singularity at ${\mathbb S}^1$. Therefore, in the border-line case the asymptotic density of the eigenvalue clusters of $H$ should be different from both the short-range case $\rho > 1$ and the long-range case $\rho \in (0,1)$. Hopefully, we will consider in detail the border-line case in a future work.
\end{itemize}
The proof of Theorem \ref{th1} is contained on Section \ref{s5}.

\section{Auxiliary results} \label{s3}
\subsection{Weyl pseudodifferential operators} \label{ss31}

Let $d \geq 1$. Denote by  ${\mathcal S}(\re^d)$ the Schwartz
class, and by ${\mathcal S}'(\re^d)$ its dual class. If $f \in {\mathcal S}(\re^d)$, then
$$
\hat{f}(\xi) : = (2\pi)^{-d/2} \int_{\re^d} e^{-ix \cdot \xi} f(x) dx, \quad \xi \in \re^d,
$$
is the Fourier transform of $f$. Whenever necessary, we extend by duality the Fourier transform to ${\mathcal S}'(\re^d)$.\\
 Let $\Gamma(\re^{2d})$, $d\geq 1$,
denote the closure of $C_{\bf b}^\infty(\re^{2d})$ with respect to the norm
    $$
    \|s\|_{\Gamma(\re^{2d})} : = \sup_{\{\alpha , \beta \in {\mathbb
Z}_+^d \; | \; |\alpha|, |\beta| \leq  [\frac{d}{2}] + 1\}}
\sup_{(x,\xi) \in \re^{2d}} |\partial_x^{\alpha}
\partial_{\xi}^{\beta} s(x,\xi)| < \infty.
    $$

\begin{pr} \label{p9}
{\rm (\cite{cv, cor})} Assume that $s \in \Gamma(\re^{2d})$.
Then the operator $\Op(s)$ defined initially as a mapping from
 ${\mathcal S}(\re^d)$ into  ${\mathcal
S}'(\re^d)$ by
    \bel{gr52}
\left(\Op(s)u\right)(x) = (2\pi)^{-d} \int_{\re^d}
\int_{\re^d} s\left(\frac{x+x'}{2},\xi\right)e^{i(x-x')\cdot \xi}
u(x')
 dx'd\xi, \quad x \in \re^d,
    \ee
extends uniquely to an operator bounded in $L^2(\re^d)$, and there exists a constant $c_0$ independent of $s$ such that
$$
     \|\Op(s)\| \leq c_0\|s\|_{\Gamma(\re^{2d})}.
    $$
   \end{pr}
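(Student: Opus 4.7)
The plan is to prove this Calderón--Vaillancourt-type bound by the Cotlar--Stein almost-orthogonality lemma applied to a phase-space partition of unity. The argument proceeds in three steps.

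First, fix $\chi \in C_{\bf b}^\infty(\re^{2d})$ supported in a fixed cube with $\sum_{k \in \Z^{2d}} \chi(\cdot - k) \equiv 1$ on $\re^{2d}$, and decompose the symbol as $s = \sum_{k \in \Z^{2d}} s_k$ with $s_k(y) := s(y)\,\chi(y-k)$, $y=(x,\xi) \in \re^{2d}$. Each $s_k$ is supported in a translate of a fixed compact set, and every derivative $\partial_x^\alpha \partial_\xi^\beta s_k$ with $|\alpha|, |\beta| \leq [d/2]+1$ is bounded, uniformly in $k$, by a fixed multiple of $\|s\|_{\Gamma(\re^{2d})}$. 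A crude Schur-type estimate (or Weyl-Heisenberg conjugation to a model operator supported in a fixed cube) shows that each $\Op(s_k)$ is bounded on $L^2(\re^d)$, with norm $\leq C\|s\|_{\Gamma(\re^{2d})}$.

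Second, use the Weyl composition (twisted product) formula to compute the symbols of $\Op(s_j)\,\Op(s_k)^*$ and $\Op(s_j)^*\,\Op(s_k)$ as oscillatory integrals, and exploit the fact that the phase-space supports of $s_j$ and $\overline{s_k}$ are separated by distance comparable to $|j-k|$. Repeated non-stationary-phase integration by parts in the oscillatory variables yields
\[
\|\Op(s_j)\,\Op(s_k)^*\| + \|\Op(s_j)^*\,\Op(s_k)\| \leq C_N \langle j-k \rangle^{-N} \|s\|_{\Gamma(\re^{2d})}^2
\]
for every $N$ for which enough derivatives of the symbol are available. The cap $[d/2]+1$ in the definition of $\|\cdot\|_{\Gamma(\re^{2d})}$ is exactly what allows one to push $N$ past $2d$, so that $\sup_k \sum_j \langle j-k\rangle^{-N/2}$ converges.

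Third, invoke the Cotlar--Stein lemma: applied to the family $\{\Op(s_k)\}_{k \in \Z^{2d}}$, it gives
\[
\|\Op(s)\| \leq \sup_k \sum_j \Bigl(\|\Op(s_j)\,\Op(s_k)^*\|^{1/2} + \|\Op(s_j)^*\,\Op(s_k)\|^{1/2}\Bigr),
\]
and the almost-orthogonality bound from Step~2 makes this sum finite and proportional to $\|s\|_{\Gamma(\re^{2d})}$, yielding $\|\Op(s)\| \leq c_0 \|s\|_{\Gamma(\re^{2d})}$ with $c_0$ independent of $s$. The main obstacle is the almost-orthogonality estimate in Step~2: the integration-by-parts scheme in the oscillatory integral for the Weyl product has to be organised so that only derivatives up to order $[d/2]+1$ of the symbol appear, matching the precise seminorm defining $\Gamma(\re^{2d})$. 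Once this calibration is done, Steps~1 and~3 are essentially bookkeeping.
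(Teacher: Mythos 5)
The paper does not prove this proposition; it quotes it from Calder\'on--Vaillancourt and Cordes \cite{cv,cor}, so there is no internal proof to compare against, only the literature. Your Cotlar--Stein strategy is a standard route to $L^2$-boundedness for $S^0_{0,0}$-type symbols, and your Step~1 (uniform bound on each block $\Op(s_k)$ via the compact support of the kernel in $x'$ for fixed $x$) is fine. The genuine gap is in Steps~2--3: you need $\sup_k\sum_{j\in\Z^{2d}}\|T_jT_k^*\|^{1/2}<\infty$, which forces $\|T_jT_k^*\|\lesssim \langle j-k\rangle^{-N}$ with $N>4d$. But integration by parts in the composition/twisted-product integral generates decay only through $x$-derivatives of the symbols falling on the $|j_\xi-k_\xi|$-separation; with $|\alpha|\leq[d/2]+1$ you obtain at most $\langle j_\xi-k_\xi\rangle^{-([d/2]+1)}$, which already falls short of $4d$. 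Worse, for a full phase-space decomposition there is \emph{no} decay at all in the $|j_x-k_x|$ direction: the kernel of $\Op(s_j)^*\Op(s_k)$ is supported in the strip $|x-x'-2(j_x-k_x)|\leq C$ and is simply translated, not diminished, as $|j_x-k_x|$ grows, so $\sum_{j_x\in\Z^d}$ diverges. The claim that ``the cap $[d/2]+1$ \ldots{} is exactly what allows one to push $N$ past $2d$'' is therefore unsupported; the na\"ive Cotlar--Stein count requires on the order of $2d$ derivatives in $x$ and $d$ in $\xi$.

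The sharp count $[\,d/2\,]+1$ in each group of variables is Cordes' refinement, and his proof is genuinely different from Cotlar--Stein: it exploits the Weyl--Heisenberg covariance of $\Op$ to represent $\Op(s)$ as a superposition of conjugates of a single fixed operator, and trades the almost-orthogonality estimate for a Sobolev-embedding/Hilbert--Schmidt argument that makes the borderline number of derivatives available. Your sketch, as written, would prove a weaker version of the proposition (with a larger, dimension-dependent number of derivatives in the norm), not the one stated.
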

   \noindent
    The operator $\Op(s)$ is called {\em a pseudodifferential operator} ($\Psi$DO) {with Weyl symbol} $s$. Assume that $s \in {\mathcal S}'(\re^{2d})$,
    $\hat{s} \in L^1(\re^{2d})$. Then the operator defined in \eqref{gr52}
    extends to an operator bounded in $L^2(\re^{d})$, and we have
    \bel{g2}
\|\Op(s)\| \leq (2\pi)^{-d}\|\hat{s}\|_{L^1(\re^{2d})}
\ee
(see \cite[Lemma 18.6.1]{ho}).
    Assume now $s \in L^2(\re^{2d})$. Then, evidently, the operator defined in \eqref{gr52}
    extends to a Hilbert--Schmidt operator in $L^2(\re^{d})$, and we have
    \begin{equation} \label{prvb2}
\|\Op(s)\|_{2}^2 = (2\pi)^{-d}\int_{\R^{2d}} |s(x,\xi)|^2\,dx \,d\xi = (2\pi)^{-d}\int_{\R^{2d}} |\hat{s}(x,\xi)|^2\,dx \,d\xi.
\end{equation}
Let $X$ be a separable Hilbert space. Then $S_\infty(X)$ denotes the class of compact linear operators acting in $X$.
If $T \in S_\infty(X)$, then $\left\{s_j(T)\right\}_{j=1}^{{\rm rank}\,T}$ denotes the set of the non-zero singular numbers of $T$ enumerated in non-increasing order, and $S_\ell(X)$ is the Schatten--von Neumann class of order $\ell \in [1,\infty)$, i. e. the class of operators $T \in S_\infty(X)$ for which the norm
$\|T\|_\ell : = \left(\sum_{j=1}^{{\rm rank}\,T}s_j(T)^\ell\right)$
is finite. Thus, $S_1(X)$ is the trace class, and $S_2(X)$ is the Hilbert-Schmidt class. Similarly,
$S_{\ell, w}(X)$
denotes the {\em weak} Schatten--von Neumann class of order $\ell \in [1,\infty)$, i. e. the class of operators $T \in S_\infty(X)$ for which the quasinorm
$\|T\|_{\ell, w} : = \sup_{j} j^{1/\ell} s_j(T)$
is finite. Whenever appropriate, we omit $X$ in the notations $S_\ell(X)$ and $S_{\ell, w}(X)$.\\
Next, we recall that $u \in L^p_w(\re^d)$, $d \geq 1$, the weak Lebesgue space of order $p \in [1,\infty)$, if the quasinorm
$\|u\|_{L^p_w(\re^d)} : = \sup_{t>0}\,t\,\left| \left\{x \in \re^d \, | \, |u(x)| > t\right\}\right|^{1/p}$
is finite.
 Evidently, $u \in {\mathcal H}_{-\rho}^\sharp(\re^d)$, $\rho \in (0,d)$, implies $u \in L_w^{d/\rho}(\re^d)$.  \\
Interpolating between \eqref{g2} and \eqref{prvb2} (see \cite[Theorem 3.1]{bs}), we obtain the following
\begin{pr} \label{gp1}
Let $m \in (2,\infty)$, $m' : = m/(m-1)$. \\
{\rm (i)} Assume that $s \in {\mathcal S}'(\re^{2d})$, $\hat{s} \in
    L^{m'}(\re^{2d})$. Then $\Op(s) \in S_m(L^2(\re^d))$, and
    $$
    \|\Op(s)\|_m \leq (2\pi)^{-d\left(1-\frac{1}{m}\right)}\|\hat{s}\|_{L^{m'}(\re^{2d})}.
 $$
 {\rm (ii)} Assume that $s \in {\mathcal S}'(\re^{2d})$, $\hat{s} \in
    L^{m'}_w(\re^{2d})$. Then $\Op(s) \in S_{m, w}(L^2(\re^d))$, and
    $$
    \|\Op(s)\|_{m,w} \leq (2\pi)^{-d\left(1-\frac{1}{m}\right)}\|\hat{s}\|_{L^{m'}_w(\re^{2d})}.
 $$
\end{pr}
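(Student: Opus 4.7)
The plan is to view the assignment $T: \hat{s} \mapsto \Op(s)$ as a single linear operator from functions on $\re^{2d}$ into bounded operators on $L^2(\re^d)$, and to obtain both parts by interpolating the two endpoint estimates already established. Indeed, \eqref{g2} says exactly that $\|T(\hat s)\|_{S_\infty} \leq (2\pi)^{-d}\|\hat s\|_{L^1(\re^{2d})}$, while \eqref{prvb2} rewrites as $\|T(\hat s)\|_{S_2} = (2\pi)^{-d/2}\|\hat s\|_{L^2(\re^{2d})}$. Thus $T$ is simultaneously bounded from $L^1(\re^{2d})$ into $S_\infty(L^2(\re^d))$ with constant $(2\pi)^{-d}$, and from $L^2(\re^{2d})$ into $S_2(L^2(\re^d))$ with constant $(2\pi)^{-d/2}$.

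For part (i), I would apply complex interpolation in the Schatten--von Neumann scale, which is precisely the content of \cite[Theorem 3.1]{bs}: the family $\{S_\ell\}_{\ell \in [2,\infty]}$ admits Riesz--Thorin type interpolation exactly as the Lebesgue scale does. With interpolation parameter $\theta = 2/m \in (0,1)$, the target becomes $S_{q_\theta}$ with $\frac{1}{q_\theta} = \frac{1-\theta}{\infty} + \frac{\theta}{2} = \frac{1}{m}$, so $q_\theta = m$, and the source becomes $L^{p_\theta}$ with $\frac{1}{p_\theta} = \frac{1-\theta}{1} + \frac{\theta}{2} = 1 - \frac{1}{m} = \frac{1}{m'}$, so $p_\theta = m'$. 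The interpolated norm bound is $(2\pi)^{-d(1-\theta)} \cdot (2\pi)^{-d\theta/2} = (2\pi)^{-d(1-1/m)}$, matching the stated constant.

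For part (ii), the same pair of endpoints combined with real (Marcinkiewicz type) interpolation between the weak Schatten ideals $S_{\ell,w}$ on one side and the weak Lebesgue spaces $L^p_w$ on the other, again as systematized in \cite{bs}, yields the weak--type analogue with the same exponents and the same constant $(2\pi)^{-d(1-1/m)}$.

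The only real technical point I anticipate is making sure that $T$ is unambiguously defined on a common dense subspace of admissible symbols (for instance the Schwartz class ${\mathcal S}(\re^{2d})$, on which both endpoint bounds are verified directly) so that the abstract interpolation machinery can be invoked; once this is in place, the bounds extend from ${\mathcal S}(\re^{2d})$ to the full $L^{m'}(\re^{2d})$ and $L^{m'}_w(\re^{2d})$ sources by density, using the closedness of $S_m$ and $S_{m,w}$ under the relevant operator--norm limits.
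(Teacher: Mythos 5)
Your proposal is correct and follows essentially the same route as the paper: the paper simply states ``Interpolating between \eqref{g2} and \eqref{prvb2} (see \cite[Theorem 3.1]{bs}), we obtain the following,'' and your argument supplies exactly this interpolation, with the exponent and constant computations carried out correctly.
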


\subsection{Operators ${\rm Op}^{\rm w}(V_B * \Psi_q)$ and ${\rm Op}^{\rm w}(V_B * \delta_k)$} \label{ss32}
Introduce the harmonic oscillator
    $$
h : =  - \frac{d^2}{dx^2} + x^2,
   $$
self-adjoint in $L^2(\re)$.
It is well known
that the spectrum of $h$ is purely discrete and simple, and
consists of the eigenvalues $2q+1$, $q \in {\mathbb Z}_+$. Denote by $p_q$ the orthogonal projection onto
 ${\rm Ker}\,(h - (2q+1))$, $q \in {\mathbb Z}_+$.
 Set
    \bel{17a}
    \Psi_q(x,\xi) = \frac{(-1)^q}{\pi}
{\rm L}_q(2(x^2 + \xi^2)) e^{-(x^2 + \xi^2)}, \quad (x,\xi) \in
\rd, \quad q \in {\mathbb Z}_+,
    \ee
where
\bel{lagpol}
    {\rm L}_q(t) : = \frac{1}{q!} e^t \frac{d^q(t^q
e^{-t})}{dt^q} = \sum_{k=0}^q \binom{q}{k} \frac{(-t)^k}{k!},
\quad t \in \re, \ee
are the Laguerre polynomials. Then $2\pi \Psi_q$ is the Weyl symbol of the operator $p_q$. \\
Denote by $P_q$, $q \in \Z_+$, the orthogonal projection onto ${\rm Ker}\,(H_0-\lambda_q)$.
Set
    \bel{sof30}
    V_B(x)=V(-B^{-1/2}x_2,-B^{-1/2}x_1), \quad x = (x_1,x_2)\in\rd.
    \ee
    \begin{pr}\label{pr50}
{\rm (\cite[Corollary 2.13]{prvb})} There exists a unitary operator ${\mathcal U}_B : L^2(\rd) \to L^2(\rd)$ such that for each
 $V\in L^1(\rd) + L^\infty(\rd)$ and each $q \in {\mathbb Z}_+$ we have
    \bel{gr7}
{\mathcal U}_B^* P_q VP_q {\mathcal U}_B = p_q\otimes {\Op}
(V_B * \Psi_q).
    \ee
    \end{pr}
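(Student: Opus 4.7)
The plan is to construct $\mathcal{U}_B$ by exploiting the two commuting Heisenberg pairs contained in the Landau operator, and then to reduce $P_q V P_q$ via Weyl calculus to a Toeplitz-type operator with symbol $V_B * \Psi_q$. Setting $\pi_1 = -i\partial_{x_1}+Bx_2/2$ and $\pi_2 = -i\partial_{x_2}-Bx_1/2$, one has $[\pi_1,\pi_2]=iB$, while the guiding-center coordinates $c_1 = x_1+\pi_2/B$ and $c_2 = x_2 - \pi_1/B$ commute with each $\pi_j$ and satisfy $[c_1,c_2] = -i/B$. By the Stone--von Neumann theorem, these two commuting Heisenberg systems are jointly unitarily equivalent to the standard position--momentum pairs on the two factors of $L^2(\re)\otimes L^2(\re)\simeq L^2(\rd)$. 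An explicit realization of the required unitary $\mathcal{U}_B$ is obtained by composing a partial Fourier transform with a suitable metaplectic change of variables; the signs and scaling conventions in \eqref{sof30} fix the precise choice. Under $\mathcal{U}_B$ the identity $H_0 = \pi_1^2+\pi_2^2$ becomes $B\, h \otimes I$, hence $\mathcal{U}_B^* P_q \mathcal{U}_B = p_q\otimes I$.

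Next, the multiplication operators $x_1 = c_1-\pi_2/B$ and $x_2 = c_2+\pi_1/B$ transform under $\mathcal{U}_B$ into two \emph{commuting} affine combinations of $(y_1,y_2,D_{y_1},D_{y_2})$, so multiplication by $V(x_1,x_2)$ becomes the Weyl $\Psi$DO on $L^2(\re^2_y)$ whose symbol is obtained by direct substitution. With the conventions above, this symbol takes the form $\sigma(y_1,y_2,\xi_1,\xi_2) = V_B(y_2 - \xi_1,\, y_1+\xi_2)$ (or an equivalent expression matching \eqref{sof30} exactly). The identity \eqref{gr7} then reduces to the Weyl-calculus statement that, for any symbol of this "magnetic-translation-invariant" form in the first phase-space pair,
\[
(p_q\otimes I)\,\Op(\sigma)\,(p_q\otimes I) \;=\; p_q\otimes\Op(V_B * \Psi_q),
\]
where the convolution on the right acts in the phase-space variables of the second factor.

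This last identity is proved by combining two ingredients: (i) the Weyl symbol of the rank-one projection $p_q$ on $L^2(\re)$ equals $2\pi \Psi_q$; and (ii) for any $f$, if $\sigma(y_1,y_2,\xi_1,\xi_2) = f(y_1+\xi_2, y_2-\xi_1)$, then the partial trace of $\Op(\sigma)$ against $p_q$ in the first factor is exactly the $\Psi$DO on the second factor with symbol $f*\Psi_q$; this is a direct consequence of writing the Wigner representation of $p_q$ and applying Fubini in the kernel formula. The main obstacle is the lack of any decay of $\sigma$ in the variables $(y_1,\xi_1)$: for general $V \in L^1+L^\infty$ the object $p_q \Op(\sigma) p_q$ must be interpreted distributionally. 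I would resolve this by first verifying the identity for $V\in\mathcal{S}(\rd)$ via direct integration against the Weyl kernel of $p_q$, and then extending by continuity---using the bound of Proposition \ref{p9} for the $L^\infty$ component of $V$ and the Hilbert--Schmidt identity \eqref{prvb2} for the $L^1$ component---to the full class $L^1(\rd)+L^\infty(\rd)$.
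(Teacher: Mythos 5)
The paper does not actually prove Proposition \ref{pr50}; it is quoted verbatim from \cite[Corollary 2.13]{prvb}, so there is no in-text proof to compare against. That said, your proposal is a correct reconstruction of the argument given in \cite{prvb}, and it follows the same conceptual route: introduce the two commuting Heisenberg pairs (the magnetic momenta $\pi_1,\pi_2$ with $[\pi_1,\pi_2]=iB$ and the guiding-center coordinates $c_1,c_2$ with $[c_1,c_2]=-i/B$), invoke Stone--von Neumann to find a single metaplectic unitary ${\mathcal U}_B$ (independent of $q$) that simultaneously brings both pairs to standard form, observe that $H_0$ becomes $B\,h\otimes I$ so that $P_q$ goes to $p_q\otimes I$, and finally reduce $(p_q\otimes I)\,\Op(\sigma)\,(p_q\otimes I)$ to $p_q\otimes\Op(\cdot)$ by convolving the symbol with the Wigner function $\Psi_q$ in the first pair of phase-space variables.

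Two points to tighten. First, your ``partial trace'' phrasing is informal: since $p_q$ has rank one on the first factor, $(p_q\otimes I)\Op(\sigma)(p_q\otimes I)$ equals $p_q\otimes S$ with $S$ a bona fide operator on the second factor, and the Weyl symbol of $S$ is $\tilde\sigma(y_2,\xi_2)=\iint\Psi_q(a,\xi_1)\,\sigma(a,y_2,\xi_1,\xi_2)\,da\,d\xi_1$; this follows directly from the kernel formula \eqref{gr52} and the identity that $2\pi\Psi_q$ is the Weyl symbol of $p_q$. Second, carrying out that integral with the symbol form you propose produces $(V_B*\Psi_q)$ evaluated at the \emph{swapped} arguments $(\xi_2,y_2)$ rather than $(y_2,\xi_2)$; this residual symplectic coordinate exchange must be absorbed by one further metaplectic factor $I\otimes{\mathcal F}$ in ${\mathcal U}_B$, and together with the scaling $B^{-1/2}$ it is exactly what produces the specific variable swap and sign flip in the definition \eqref{sof30} of $V_B$. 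You gesture at this (``fix the precise choice''), but it is worth making explicit since otherwise the symbol does not match. Your strategy of verifying the identity first for $V\in{\mathcal S}(\rd)$ and then extending by density to $L^1(\rd)+L^\infty(\rd)$, using \eqref{g2} and \eqref{prvb2} for the two summands, is correct and is the standard way to handle the regularity issue.
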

    \noindent
    For $k>0$, define the distribution $\delta_k \in {\mathcal S}'(\re^2)$  by
$$
\delta_k(\varphi) : =
\frac1{2\pi}\int_0^{2\pi}\varphi(k\cos\theta,k\sin\theta)d\theta,
\quad \varphi\in {\mathcal S}(\rd).
$$
\begin{pr} \label{p4}
Assume that $V \in {\mathcal S}_1^{-\rho}(\rd)$  with $\rho \in (0,\infty)$. Then the operator $\Op(V_B * \delta_k)$, $k > 0$,
is bounded and there exists a constant $c_1$ such that
\bel{33}
\|\Op(V_B * \delta_k)\| \leq c_1 \left\{
\begin{array} {l}
k^{-\rho} \quad \rm{if} \quad \rho \in (0,1),\\
k^{-1} \ln k \quad \rm{if} \quad \rho = 1,\\
k^{-1} \quad \rm{if} \quad \rho \in (1,\infty),
\end{array}
\right.
\quad k \in [2,\infty).
\ee
\end{pr}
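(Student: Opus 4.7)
The crucial observation is that the symbol $V_B*\delta_k$ depends only on the spatial variable $x$ and not on $\xi$. A direct inspection of \eqref{gr52} shows that for any $x$-only symbol $s(x,\xi)=f(x)$, the $\xi$-integral produces $(2\pi)^{2}\delta(x-x')$, so $\Op(f)$ is simply the operator of multiplication by $f$ on $L^2(\rd)$. Therefore $\|\Op(V_B*\delta_k)\|=\|V_B*\delta_k\|_{L^\infty(\rd)}$, and the proposition reduces to a pointwise estimate on the circular average
$$(V_B*\delta_k)(x)=\frac{1}{2\pi}\int_0^{2\pi}V_B(x-k\omega(\theta))\,d\theta,\qquad \omega(\theta):=(\cos\theta,\sin\theta),$$
using only the bound $|V_B(y)|\leq c\langle y\rangle^{-\rho}$ that follows from $V\in\mathcal{S}_1^{-\rho}(\rd)$.

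I would split $\rd$ into two regions. Outside the annulus $\{k/2\leq|x|\leq 2k\}$ the triangle inequality yields $|x-k\omega(\theta)|\geq \frac{1}{2}\max(|x|,k)$ uniformly in $\theta$, so $|(V_B*\delta_k)(x)|\lesssim k^{-\rho}$ without further effort --- a rate better than any of the three claimed in \eqref{33}. The delicate case is $|x|\in[k/2,2k]$. By rotational invariance of $\delta_k$, take $x=(r,0)$ with $r\in[k/2,2k]$. Then
$$|x-k\omega(\theta)|^2=(r-k)^2+4kr\sin^2(\theta/2),$$
and using $\sin(\theta/2)\geq \theta/\pi$ on $[0,\pi]$, the circular average is dominated by
$$\int_0^\pi\frac{d\theta}{(1+|r-k|+\sqrt{kr}\,\theta)^{\rho}}=\frac{1}{\sqrt{kr}}\int_0^{\pi\sqrt{kr}}\frac{du}{(1+|r-k|+u)^{\rho}}.$$

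Since $\sqrt{kr}\asymp k$ on the annulus, the remaining elementary integral in $u$ is $O(k^{1-\rho})$ for $\rho\in(0,1)$, $O(\ln k)$ for $\rho=1$, and $O(1)$ for $\rho>1$ (where the integral from $0$ to $\infty$ converges). Multiplying by the prefactor $1/\sqrt{kr}\asymp 1/k$ yields respectively $k^{-\rho}$, $k^{-1}\ln k$ and $k^{-1}$, establishing \eqref{33}. The only step requiring real care is this three-way case analysis of the final one-dimensional integral; everything else --- the reduction to a multiplication operator and the trivial far-field estimate --- is routine.
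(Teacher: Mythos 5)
Your opening reduction is wrong, and it is a genuine gap that breaks the proof. You assert that $V_B*\delta_k$ ``depends only on the spatial variable $x$ and not on $\xi$,'' so that $\Op(V_B*\delta_k)$ is a multiplication operator with norm $\|V_B*\delta_k\|_{L^\infty}$. But look at the dimensions: here $d=1$, the quantization in \eqref{gr52} sends a symbol on $\re^{2d}=\re^2$ to an operator on $L^2(\re)$, and the symbol $V_B*\delta_k$ is a function on $\re^2$ whose two arguments play the roles of $x$ \emph{and} $\xi$ (this is implicit in Proposition \ref{pr50}, where $\Op(V_B*\Psi_q)$ acts on the second tensor factor $L^2(\re)$). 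So $\Op(V_B*\delta_k)$ is a genuine pseudodifferential operator, not a multiplication operator, and its $L^2\to L^2$ norm is not controlled by the sup-norm of the symbol alone. The correct route — and the paper's — is to invoke the Calder\'on--Vaillancourt bound of Proposition \ref{p9}, which gives $\|\Op(V_B*\delta_k)\| \leq c_0 \max_{|\alpha|\le 2}\sup_z |(D^\alpha V_B * \delta_k)(z)|$; the hypothesis $V\in{\mathcal S}_1^{-\rho}(\rd)$ is then used precisely to ensure that every derivative $D^\alpha V_B$ with $|\alpha|\le 2$ obeys the same $\langle\,\cdot\,\rangle^{-\rho}$ decay, reducing the problem to the pointwise bound on $\langle\,\cdot\,\rangle^{-\rho}*\delta_k$.

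The second half of your argument — the pointwise estimate of the circular average — is correct and, in fact, slightly sharper and more geometrically transparent than the paper's. You split into near-annulus and far regions and track the law of cosines $|x-k\omega(\theta)|^2=(r-k)^2+4kr\sin^2(\theta/2)$, obtaining the three rates from the one-variable integral $\int_0^{\pi\sqrt{kr}}(1+|r-k|+u)^{-\rho}\,du$. The paper instead simply drops the nonnegative term $(k\cos\theta-|z|)^2$, reducing to the $z$-independent integral $\int_0^{2\pi}(1+k^2\sin^2\theta)^{-\rho/2}\,d\theta$, which yields the same rates by an elementary substitution. Your computation would serve perfectly as a replacement for \eqref{d4}--\eqref{d5}; it just cannot do so without the missing Calder\'on--Vaillancourt step that precedes it.
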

\begin{proof}
Proposition \ref{p4} is an  extension of \cite[Lemma 3.2]{prvb} which concerned only the case $\rho > 1$.
By Proposition \ref{p9},
    \bel{d1}
    \|\Op(V_B * \delta_k)\| \leq c_0 \max_{\alpha \in \Z_+^2: 0 \leq |\alpha| \leq 2} \sup_{z \in \rd} |(D^\alpha V * \delta_k)(z)|.
    \ee
    By $V \in {\mathcal S}_1^{-\rho}(\rd)$, we have
    \bel{d2}
    |D^\alpha V(x)| \leq c_{1,\alpha} \langle x \rangle^{-|\alpha| - \rho} \leq c_{1,\alpha} \langle x \rangle^{ - \rho}, \quad x \in \rd, \quad \alpha \in \Z_+^2,
    \ee
    with constants $c_{1,\alpha}$ which may depend on $B$ but are independent of $x$. Now \eqref{d1} and \eqref{d2} imply
    \bel{d3}
     \|\Op(V_B * \delta_k)\| \leq c_1'   \sup_{z \in \rd} \langle \cdot \rangle^{-\rho} * \delta_k(z).
     \ee
     Note that the function  $\langle \cdot \rangle^{-\rho} * \delta_k$ is radially symmetric. Arguing as in the proof of \cite[Lemma 3.2]{prvb}, we get
     $$
     (\langle \cdot \rangle^{-\rho} * \delta_k)(z) = \frac{1}{2\pi} \int_0^{2\pi} ((k \cos{\theta} - |z|)^2 + k^2 \sin^2{\theta} + 1)^{-\rho/2} d\theta \leq
     $$
     \bel{d4}
      \frac{1}{2\pi} \int_0^{2\pi} (k^2 \sin^2{\theta} + 1)^{-\rho/2} d\theta = \frac{2}{\pi} \int_0^{\pi/2} (k^2 \sin^2{\theta} + 1)^{-\rho/2} d\theta \leq
      \int_0^1 (k^2 t^2 + 1)^{-\rho/2} dt = : I_\rho(k).
     \ee
     Elementary calculations yield
     \bel{d5}
I_\rho(k) =  \left\{
\begin{array} {l}
O(k^{-\rho}) \quad \rm{if} \quad \rho \in (0,1),\\
O(k^{-1} \ln k) \quad \rm{if} \quad \rho = 1,\\
O(k^{-1}) \quad \rm{if} \quad \rho \in (1,\infty),
\end{array}
\right.
\quad k \in [2,\infty).
\ee
Putting together \eqref{d3} -- \eqref{d5}, we obtain \eqref{33}.
\end{proof}

\begin{pr} \label{p5}
Assume that $V \in {\mathcal S}_1^{-\rho}(\rd)$  with $\rho \in (0,\infty)$. Then  $\Op(V_B * \Psi_q) - \Op(V_B * \delta_{\sqrt{2q+1}}) \in S_2$,
 and there exists a constant $c_2$ independent of $q$, such that
\bel{34}
\|\Op(V_B * \Psi_q) - \Op(V_B * \delta_{\sqrt{2q+1}})\|_2 \leq c_2 \lambda_q^{-3/4}, \quad q \in {\mathbb Z}_+.
\ee
\end{pr}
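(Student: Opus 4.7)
The plan is to pass to the Fourier side, where both $\Psi_q$ and $\delta_k$ (with $k:=\sqrt{2q+1}$) admit closed-form transforms, and then exploit the Hilb-type uniform asymptotic of Laguerre polynomials. By identity \eqref{prvb2} applied to the symbol $V_B*(\Psi_q-\delta_k)$, together with Plancherel and the convolution rule $\widehat{V_B*f}=2\pi\,\widehat{V_B}\,\hat f$ (in the paper's normalization), the bound \eqref{34} reduces to
$$
\int_{\rd}|\widehat{V_B}(\xi)|^2\,\bigl|\widehat{\Psi_q}(\xi)-\widehat{\delta_k}(\xi)\bigr|^2\,d\xi \;\le\; C\,\lambda_q^{-3/2}.
$$

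A direct computation based on the Laguerre generating function and the Laplace identity $\int_0^\infty e^{-t/2}L_q(t)\,J_0(2\sqrt{ct})\,dt = 2(-1)^q e^{-2c}L_q(4c)$ yields the explicit formulas
$$
\widehat{\Psi_q}(\xi)=\frac{1}{2\pi}\,e^{-|\xi|^2/4}L_q(|\xi|^2/2),\qquad \widehat{\delta_k}(\xi)=\frac{1}{2\pi}\,J_0(k|\xi|),
$$
which coincide at $\xi=0$ and share their $|\xi|^2$ Taylor coefficients (matching the moment identities $\int_{\rd}\Psi_q\,dx = 1 = \int_{\rd}\delta_k$ and $\int_{\rd}|x|^2\Psi_q\,dx = 2q+1 = \int_{\rd}|x|^2\delta_k$); consequently $\widehat{\Psi_q}(\xi)-\widehat{\delta_k}(\xi) = |\xi|^4/(128\pi) + O(|\xi|^6(2q+1))$ near the origin, with a $q$-independent leading constant. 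On the other hand, the Hilb-type asymptotic (Szeg\H o, \emph{Orthogonal Polynomials}, Thm.\ 8.22.4),
$$
e^{-t/2}L_q(t) \;=\; J_0\bigl(\sqrt{2(2q+1)t}\bigr) + O(q^{-3/4}) \quad (q\to\infty),
$$
valid uniformly on compact subsets of $(0,\infty)$, gives upon substitution $t=|\xi|^2/2$ the uniform bound $|\widehat{\Psi_q}(\xi)-\widehat{\delta_k}(\xi)| = O(\lambda_q^{-3/4})$ for $|\xi|$ in compacts of $\rd\setminus\{0\}$.

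To close the integral I split $\rd$ into $\{|\xi|<\epsilon\}\cup\{\epsilon\le|\xi|\le R\}\cup\{|\xi|>R\}$. On the middle annulus the Hilb estimate squared furnishes the desired $O(\lambda_q^{-3/2})$ contribution, $\widehat{V_B}$ being smooth there. On $\{|\xi|<\epsilon\}$ the fourth-order vanishing at the origin absorbs the local singularity $|\widehat{V_B}(\xi)|^2\lesssim|\xi|^{2\rho-4}$ coming from the slow decay of $V$ (the delicate regime $\rho\le 1$). On $\{|\xi|>R\}$ one invokes the rapid $L^2$-decay of $\widehat{V_B}$ from the smoothness of $V\in\mathcal{S}_1^{-\rho}$, since $|\xi|^N\widehat{V_B}\in L^2$ for every $N\ge 2$. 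The main technical obstacle is synchronizing the three regimes with constants uniform in $q$: the Hilb constant on $[\epsilon,R]$ degenerates as $\epsilon\to 0$ or $R\to\infty$, so the cutoffs must be tuned---possibly as suitable powers of $\lambda_q$---to balance the near-origin Taylor error, the Hilb bulk error, and the tail decay so that each contribution is $O(\lambda_q^{-3/2})$.
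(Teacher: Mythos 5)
Your strategy---pass to the Fourier side via \eqref{prvb2}, use the closed forms $\widehat{\Psi_q}=\frac{1}{2\pi}e^{-|\xi|^2/4}{\rm L}_q(|\xi|^2/2)$, $\widehat{\delta_k}=\frac{1}{2\pi}J_0(k|\xi|)$, invoke Hilb/Szeg\H o-type asymptotics for Laguerre polynomials, and exploit the decay of $\widehat{V_B}$---is exactly the paper's strategy. The difference is that the paper does not partition $\rd$ at all: it imports from \cite[Eq.\ (3.10)]{prvb} the single pointwise bound
$$
\left|{\rm L}_q(r)e^{-r/2}-J_0(\sqrt{(4q+2)r})\right|\le \tilde c_2\left((q+1)^{-3/4}r^{5/4}+(q+1)^{-1}r^3\right),\qquad q\in\Z_+,\ r>0,
$$
with $\tilde c_2$ independent of $q$ and $r$, and then simply integrates once. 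This bound already encodes all three of your regimes: for $r\lesssim(q+1)^{-1}$ the genuine Taylor error is $O(r^2)$, but since $r^2=r^{3/4}\cdot r^{5/4}\lesssim(q+1)^{-3/4}r^{5/4}$ in that range, the $r^{5/4}$ term absorbs it; for $r$ in a bounded range the standard Szeg\H o error $(q+1)^{-3/4}r^{5/4}$ applies directly; and for large $r$ the polynomial $(q+1)^{-1}r^3$ term, paired with the rapid decay $\widehat{V_B}(\zeta)=O(|\zeta|^{-N})$, closes the integral.

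The point where your write-up genuinely stops short is precisely where you say ``the main technical obstacle is synchronizing the three regimes \ldots the cutoffs must be tuned.'' That is a named gap, not a proof. Moreover, part of your worry is misplaced: Szeg\H o's Theorem 8.22.4 is uniform on a whole interval $[0,\omega]$, not merely on compacts of $(0,\infty)$, so the constant does not degenerate as $\epsilon\to0$; only the $R\to\infty$ tail requires a separate polynomial bound. You also correctly observe that the leading small-$\xi$ Taylor coefficient of $\widehat{\Psi_q}-\widehat{\delta_k}$ is $q$-\emph{independent} (your $|\xi|^4/(128\pi)$), which by itself would seem to spoil the $O(\lambda_q^{-3/2})$ decay of the inner contribution for fixed $\epsilon$; the resolution, which you should state explicitly, is that this $r^2$-type error only persists in the shrinking window $r\lesssim(q+1)^{-1}$, where it is dominated by $(q+1)^{-3/4}r^{5/4}$. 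Without that observation, your ``tuned cutoff'' plan does not obviously close. Once you replace the three-region cut-and-paste by the single global estimate above (or by that observation about absorption of the $r^2$ term), the rest of your argument---the convolution rule, Plancherel, $|\widehat{V_B}(\zeta)|=O(|\zeta|^{-2+\rho})$ near the origin and $O(|\zeta|^{-N})$ at infinity---coincides with the paper's.
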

\begin{proof} Proposition \ref{p5} is an  extension of the second part of \cite[Lemma 3.1]{prvb} which concerned the case $V \in C_0^{\infty}(\rd)$. By \eqref{prvb2} we have
$$
\|\Op(V_B * \Psi_q) - \Op(V_B * \delta_{\sqrt{2q+1}})\|^2_2 = \frac{1}{2\pi} \int_{\rd} |(V_B * \Psi_q)(z) - (V_B * \delta_{\sqrt{2q+1}})(z)|^2 dz =
$$
\bel{d6}
 \frac{1}{2\pi} \int_{\rd} |(\widehat{V_B * \Psi_q})(\zeta) - (\widehat{V_B * \delta_{\sqrt{2q+1}}})(\zeta)|^2 d\zeta.
 \ee
An explicit calculation (see \cite[Eq. (3.9)]{prvb}) yields
\bel{d8}
(\widehat{V_B * \Psi_q})(\zeta) - (\widehat{V_B * \delta_{\sqrt{2q+1}}})(\zeta) =
\left({\rm L}_q(|\zeta|^2/2)e^{-|\zeta|^2/4} - J_0(\sqrt{2q+1} |\zeta|)\right) \widehat{V_B}(\zeta), \quad \zeta \in \rd,
\ee
where ${\rm L}_q$ is the Laguerre polynomial defined in \eqref{lagpol}, and $J_0$ is the Bessel function of zeroth order. Moreover,  there exists a constant
$\tilde{c}_2$ such that
\bel{d7}
\left|{\rm L}_q(r) e^{-r/2} - J_0(\sqrt{(4q+2)r})\right| \leq \tilde{c}_2 \left((q+1)^{-3/4} r^{5/4} + (q+1)^{-1} r^3\right), \quad q \in {\mathbb Z}_+, \quad r>0,
\ee
(see \cite[(Eq. (3.10)]{prvb} for the generic case $q \in \N$; if $q=0$, then \eqref{d7} follows from $|e^{-r/2} - J_0(\sqrt{r})| = O(r^2)|$, $r \in (0,1)$, and
$|e^{-r/2} - J_0(\sqrt{r})| = O(1)$, $r \geq 1$). Further,
$$
|\widehat{V_B}(\zeta)| = \left\{
\begin{array} {l}
O(|\zeta|^{-2+\rho}) \quad {\rm if} \quad \rho \in (0,2),\\
O(|\ln{|\zeta|}|)  \quad {\rm if} \quad \rho = 2,\\
O(1)  \quad {\rm if} \quad \rho > 2,
\end{array}
\right.
\quad |\zeta| \leq 1/2, \\
$$
and
$$
|\widehat{V_B}(\zeta)| = O(|\zeta|^{-N}), \quad |\zeta| > 1/2, \quad N>0,
$$
(see \cite[Chapter XII, Lemma 3.1]{taylor}). In particular, the functions
$|\zeta|^m \widehat{V_B}(\zeta)$, $\zeta \in \rd$, with $m > 1-\rho$ if $\rho \in (0,2)$ or with $m > -1$ if $\rho \geq 2$, are in $L^2(\rd)$. Combining
\eqref{d6}, \eqref{d8}, and \eqref{d7}, we get
$$
\|\Op(V_B * \Psi_q) - \Op(V_B * \delta_{\sqrt{2q+1}})\|^2_2 \leq
\frac{\tilde{c}_2^2}{\pi} \int_{\rd} \left((q+1)^{-3/2} |\zeta|^5 + (q+1)^{-2} |\zeta|^{12}\right)|\widehat{V_B}(\zeta)|^2 d\zeta,
$$
which yields \eqref{34}.
\end{proof}
\noindent
{\em Remark}: Estimate \eqref{34} could be interpreted as a manifestation of the equipartition of the eigenfunctions of the harmonic oscillator $h$, i.e. the appropriate weak convergence as $q \to \infty$ of the Wigner function $2\pi \Psi_q$ associated with the $q$th normalized eigenfunction of $H$, to the measure invariant with respect to the classical flow (see e.g. \cite{cdv, z, bdb} for related results concerning various ergodic quantum  systems).

\subsection{Norm estimates for Berezin--Toeplitz operators} \label{ss33}
\begin{pr} \label{p11}
 {\rm (i)} Let $V \in L^p(\rd)$, $p \in [1,\infty)$. Then for each $q \in {\mathbb Z}_+$ we have $P_q V P_q \in S_p$ and
\bel{31}
\|P_q V P_q\|_p^p  \leq \frac{B}{2\pi} \|V\|^p_{L^p(\rd)}.
\ee
{\rm (ii)} Let $V \in L^p_w(\rd)$, $p \in (1,\infty)$. Then for each $q \in {\mathbb Z}_+$ we have $P_q V P_q \in S_{p,w}$ and
\bel{g101}
\|P_q V P_q\|_{p,w}^p  \leq \frac{B}{2\pi} \|V\|^p_{L^p_w(\rd)}.
\ee
\end{pr}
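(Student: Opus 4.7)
The strategy is to reduce the estimates to the two endpoint cases $p=1$ and $p=\infty$ for the linear map $T_q \colon V \mapsto P_q V P_q$, and then to interpolate. The endpoint $p=\infty$ is immediate: $\|T_q V\| \leq \|P_q\|^2 \|V\|_{L^\infty} = \|V\|_{L^\infty}$. The heart of the matter is therefore the $p=1$ estimate.

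The starting point for $p=1$ is the trace identity $\tr(P_q V P_q) = \frac{B}{2\pi}\int_{\rd} V$ for $V \geq 0$ in $L^1(\rd)$. I obtain it from Proposition~\ref{pr50} together with the standard trace formula for Weyl $\Psi$DOs: using $\tr p_q = 1$, the normalization $\int \Psi_q = 1$ (which is forced by $\Op(2\pi \Psi_q) = p_q$ and $\tr \Op(s) = (2\pi)^{-1}\int s$), and the change-of-variable relation $\int V_B = B \int V$ built into the definition \eqref{sof30}, I obtain
\begin{equation*}
\tr(P_q V P_q) = \tr p_q \cdot \tr \Op(V_B * \Psi_q) = \frac{1}{2\pi}\int_{\rd} V_B(y)\,dy\int_{\rd} \Psi_q(z)\,dz = \frac{B}{2\pi}\|V\|_{L^1}.
\end{equation*}
For complex-valued $V \in L^1(\rd)$, I use the polar decomposition $V = U|V|$ to factor $T_q V = (P_q U |V|^{1/2})(|V|^{1/2} P_q)$ and apply the H\"older inequality for singular values $\|AB\|_1 \leq \|A\|_2 \|B\|_2$. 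This bounds $\|T_q V\|_1$ by $\||V|^{1/2} P_q\|_2^2 = \tr(P_q |V| P_q) = \frac{B}{2\pi}\|V\|_{L^1}$, which is \eqref{31} at $p=1$.

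With both endpoints in hand, part (i) follows from complex interpolation (Riesz--Thorin for Schatten scales) applied to $T_q$ between $L^1 \to S_1$ with constant $B/(2\pi)$ and $L^\infty$ into the bounded operators on $L^2(\rd)$ with constant $1$; this yields $\|T_q V\|_p \leq (B/(2\pi))^{1/p}\|V\|_{L^p}$ and hence \eqref{31}. Part (ii) follows by the analogous real (Marcinkiewicz-type) interpolation between the same pair of endpoints, which preserves the weak Lebesgue and weak Schatten structure on the source and target sides respectively, and delivers \eqref{g101}. The only mild obstacle is tracking the explicit constant $B/(2\pi)$ through the change of variables in \eqref{sof30}; beyond that, the argument is a textbook combination of the trace formula on Landau subspaces with standard interpolation for operator ideals.
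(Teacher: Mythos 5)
Your overall strategy matches the paper's exactly: establish the endpoint bounds at $p=1$ (with constant $B/(2\pi)$) and $p=\infty$ (with constant $1$), then appeal to Birman--Solomjak interpolation \cite[Theorem 3.1]{bs} to obtain both the strong Schatten bound (i) and the weak Schatten bound (ii). The only real difference lies in how the $L^1$ endpoint is reached: the paper computes $\|P_q V P_q\|_1 \leq \frac{B}{2\pi}\|V\|_{L^1}$ directly from the explicit integral kernel of the projection $P_q$ (citing \cite[Eq.\ (3.2)]{fr}), whereas you route through the unitary equivalence ${\mathcal U}_B^* P_q V P_q {\mathcal U}_B = p_q \otimes \Op(V_B*\Psi_q)$ of Proposition~\ref{pr50}, the $\Psi$DO trace formula, and the normalization $\int\Psi_q=1$. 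Both produce the same constant and are essentially equivalent (the unitary equivalence is itself built from the same kernel data), so this is a cosmetic reshuffling rather than a genuinely different argument. Your polar-decomposition step to pass from $V\geq 0$ to signed/complex $V$ via $\|AB\|_1\leq\|A\|_2\|B\|_2$ is sound and makes explicit something the paper elides as ``easily obtain''; the one point you should be aware of is that the identity $\tr\Op(s)=(2\pi)^{-1}\int s$ requires justification in the generality you invoke it (it is not automatic for an arbitrary $L^1$ symbol), though here it is unproblematic because $V_B*\Psi_q$ is a convolution of an $L^1$ function with a Schwartz function and $P_qVP_q$ is already known to be trace class by the kernel argument.
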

\begin{proof}
Using the explicit expression for the integral kernel of $P_q$ (see e.g. \cite[Eq. (3.2)]{fr}), we easily obtain
    \bel{g102}
    \|P_q V P_q\|_1  \leq \frac{B}{2\pi} \|V\|_{L^1(\rd)}
    \ee
    with an equality if $V = \overline{V} \geq 0$. Moreover, evidently,
    \bel{g103}
    \|P_q V P_q\|  \leq  \|V\|_{L^\infty(\rd)}.
    \ee
    Interpolating between \eqref{g102} and \eqref{g103} (see \cite[Theorem 3.1]{bs}), we obtain \eqref{31} and \eqref{g101}.
    \end{proof}
    \noindent
{\em Remark}: The first part of Proposition \ref{p11} has been known since long ago (see (\cite[Lemma 5.1]{r0}, \cite[Lemma 3.1]{fr}).
\begin{follow} \label{f2}
Let $V \in L^p(\rd)$, $p \in [2,\infty)$. Then for each $q \in {\mathbb Z}_+$ we have $P_q V = (V P_q)^* \in S_p$, and
\bel{b1}
\|P_q V \|_p^p = \| V P_q\|_p^p   \leq \frac{B}{2\pi} \|V\|^p_{L^p(\rd)}.
\ee
\end{follow}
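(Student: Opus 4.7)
The plan is to reduce Corollary \ref{f2} to Proposition \ref{p11}(i) via the identification of the squared singular values of $V P_q$ with the eigenvalues of the Berezin--Toeplitz operator $P_q |V|^2 P_q$. First, since $V: \rd \to \re$ and $P_q^* = P_q$, the identity $P_q V = (V P_q)^*$ is immediate, and because singular values are preserved under taking adjoints, one automatically obtains $\|P_q V\|_p = \|V P_q\|_p$ whenever either side is finite. Hence it suffices to bound $\|V P_q\|_p$.

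Second, I would write
$$
|V P_q|^2 = (V P_q)^* (V P_q) = P_q V^2 P_q = P_q |V|^2 P_q,
$$
so that, by the standard relation $\| T \|_p^p = \| \, |T|^2 \, \|_{p/2}^{p/2}$ valid for $p \in [2,\infty)$, we have
$$
\|V P_q\|_p^p = \bigl\| \, P_q |V|^2 P_q \, \bigr\|_{p/2}^{p/2}.
$$

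Third, since $V \in L^p(\rd)$ with $p \in [2,\infty)$, the function $|V|^2$ lies in $L^{p/2}(\rd)$ with $p/2 \in [1,\infty)$, which is precisely the range covered by Proposition \ref{p11}(i). Applying that proposition to $|V|^2$ yields
$$
\bigl\| P_q |V|^2 P_q \bigr\|_{p/2}^{p/2} \leq \frac{B}{2\pi} \, \bigl\| \, |V|^2 \, \bigr\|_{L^{p/2}(\rd)}^{p/2} = \frac{B}{2\pi} \, \|V\|_{L^p(\rd)}^p,
$$
which is exactly \eqref{b1}. The only minor point is verifying that the identity $\|T\|_p^p = \||T|^2\|_{p/2}^{p/2}$ holds in the Schatten sense, but this is immediate from the definition $\|T\|_p^p = \sum_j s_j(T)^p$ together with $s_j(|T|^2) = s_j(T)^2$. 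There is no substantive obstacle: the corollary is a direct consequence of Proposition \ref{p11}(i) together with elementary manipulations of singular values.
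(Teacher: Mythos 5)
Your proof is correct and follows essentially the same route as the paper: reduce $\|P_q V\|_p^p$ (equivalently $\|V P_q\|_p^p$) to $\|P_q |V|^2 P_q\|_{p/2}^{p/2}$ via the singular-value identity $\|T\|_p^p = \||T|^2\|_{p/2}^{p/2}$, then apply Proposition~\ref{p11}(i) to $|V|^2 \in L^{p/2}(\rd)$. The paper's proof is the same one-line chain of (in)equalities, merely written more tersely.
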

\begin{proof}
Estimates \eqref{b1} follow immediately from \eqref{31} since we have
$$
\|P_q V \|_p^p = \|P_q |V|^2 P_q \|_{p/2}^{p/2} \leq  \frac{B}{2\pi} \|V^2\|^{p/2}_{L^{p/2}(\rd)} = \frac{B}{2\pi} \|V\|^p_{L^p(\rd)}.
$$
\end{proof}
\begin{pr} \label{p2}
Assume that $V$ satisfies \eqref{25} with $\rho \in (0,\infty)$. Then there exists a constant $c_\infty$ such that
    \bel{32a}
    \|P_q V P_q\| \leq c_\infty \left\{
    \begin{array} {l}
    \lambda_q^{-\rho/2} \quad {\rm if} \quad \rho \in (0,1),\\
    \lambda_q^{-1/2}|\ln{\lambda_q}| \quad {\rm if} \quad \rho = 1,\\
    \lambda_q^{-1/2} \quad {\rm if} \quad \rho \in (1,\infty),
    \end{array}
    \right.
    \quad q \in {\mathbb Z}_+.
    \ee
    \end{pr}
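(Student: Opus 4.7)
The strategy is to combine the unitary equivalence of Proposition \ref{pr50} with the norm bounds from Propositions \ref{p4} and \ref{p5}. Since those two propositions require $V \in \mathcal{S}_1^{-\rho}(\rd)$ while the present statement assumes only the pointwise estimate \eqref{25}, the first step will be to reduce to a smooth majorant. Writing $V = V_+ - V_-$ with $V_\pm \geq 0$, both satisfy $0 \leq V_\pm(x) \leq W(x) := c\langle x \rangle^{-\rho}$, and the function $W$ lies in $\mathcal{S}_1^{-\rho}(\rd)$. Operator monotonicity of the Berezin--Toeplitz quantization, $0 \leq A \leq B \Rightarrow P_q A P_q \leq P_q B P_q$ (proved directly from $\langle P_q A P_q f, f \rangle = \langle A P_q f, P_q f \rangle$), then yields
$$
\|P_q V P_q\| \leq \|P_q V_+ P_q\| + \|P_q V_- P_q\| \leq 2\,\|P_q W P_q\|,
$$
so it suffices to establish the estimate with $V$ replaced by $W$.

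Next, by Proposition \ref{pr50} and the fact that $p_q$ is a rank-one projection, we have $\|P_q W P_q\| = \|\Op(W_B * \Psi_q)\|$. Setting $k_q := \sqrt{2q+1} = \sqrt{\lambda_q/B}$, we split
$$
\Op(W_B * \Psi_q) = \Op(W_B * \delta_{k_q}) + \bigl(\Op(W_B * \Psi_q) - \Op(W_B * \delta_{k_q})\bigr).
$$
For $q$ large enough that $k_q \geq 2$, Proposition \ref{p4} applied to $W$ bounds the first summand by $c_1 k_q^{-\rho}$, $c_1 k_q^{-1}\ln k_q$, or $c_1 k_q^{-1}$ according to whether $\rho \in (0,1)$, $\rho = 1$, or $\rho > 1$; translated back via $k_q = \sqrt{\lambda_q/B}$, these are exactly the three bounds asserted in \eqref{32a}, up to a $B$-dependent constant.

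Proposition \ref{p5}, in turn, bounds the second summand in the Hilbert--Schmidt norm, and hence in the operator norm, by $c_2 \lambda_q^{-3/4}$. Since $-3/4 < -\rho/2$ throughout $\rho \in (0, 3/2)$ and $-3/4 < -1/2$ unconditionally, this remainder is asymptotically strictly smaller than the main term in each of the three regimes of \eqref{32a} and is absorbed into the final constant $c_\infty$; the finitely many small values of $q$ for which $k_q < 2$ are dispatched by the trivial bound $\|P_q V P_q\| \leq \|V\|_\infty$, again absorbed into $c_\infty$. No serious obstacle is anticipated: the only mildly subtle point is the monotonicity reduction that bypasses the smoothness hypothesis implicit in Propositions \ref{p4} and \ref{p5}, after which the argument is a direct assembly of the preceding estimates.
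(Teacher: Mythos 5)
Your proposal is correct and follows the paper's proof essentially verbatim: the paper likewise reduces via an ``elementary variational argument'' to $V(x)=\langle x\rangle^{-\rho}$, then combines Proposition \ref{pr50}, the triangle inequality, Proposition \ref{p4}, and Proposition \ref{p5} exactly as you do. Your only addition is spelling out the variational reduction through the $V_\pm$ decomposition and operator monotonicity (the paper leaves this implicit, and one can even avoid the factor of $2$ by noting $|V|\le W$ gives $-P_qWP_q\le P_qVP_q\le P_qWP_q$ directly); the rest is the same assembly.
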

\begin{proof}
An elementary variational argument implies that we may assume without loss of generality
that $V(x) = \langle x \rangle^{-\rho}$, $x \in \re^2$; then,  $V \in {\mathcal S}_1^{-\rho}(\rd)$. By Propositions \ref{pr50} and \ref{p5} we have
$$
 \|P_q V P_q\| =  \|\Op(V_B * \Psi_q)\| \leq
 $$
 $$
  \|\Op(V_B * \delta_{\sqrt{2q+1}})\| + \|\Op(V_B * \Psi_q) - \Op(V_B * \delta_{\sqrt{2q+1}})\| \leq
  $$
  $$
   \|\Op(V_B * \delta_{\sqrt{2q+1}})\| + \|\Op(V_B * \Psi_q) - \Op(V_B * \delta_{\sqrt{2q+1}})\|_2 \leq
   $$
   \bel{d9}
\|\Op(V_B * \delta_{\sqrt{2q+1}})\|  + c_2 \lambda_q^{-3/4}.
    \ee
    Now, \eqref{d9} and \eqref{33} yield immediately \eqref{32a}.
\end{proof}
\noindent
{\em Remark}: Estimates \eqref{32a} with $\rho \neq 1$ are sharp. For $\rho > 1$ this follows from the argument of
\cite{korpu} where the estimate $\|P_q V P_q\| \leq c_\infty \lambda_q ^{-1/2}$ was obtained for compactly supported $V$. Namely, if $\left\{\phi_{k,q}\right\}_{k=-q}^{\infty}$
is the so called {\em angular--momentum} orthonormal basis  of the Hilbert space $P_q\,L^2(\rd)$,
$q \in \Z_+$ (see e.g. \cite{rw}), and  $\one_R$ is the characteristic function of a disk of finite radius $R>0$, centered at the origin, then
$$
\liminf_{q \to \infty} \lambda_q^{1/2} \langle \one_R \phi_{0,q} , \phi_{0,q}\rangle_{L^2(\rd)} > 0,
$$
which implies the sharpness of estimates \eqref{32a} with $\rho>1$.
Similarly, if $\rho \in (0,1)$, we can show that
$$
\liminf_{q \to \infty} \lambda_q^{\rho/2} \langle \langle \cdot \rangle^{-\rho} \phi_{-q,q} , \phi_{-q,q}\rangle_{L^2(\rd)} > 0,
$$
which entails the sharpness of estimates \eqref{32a} with $\rho  \in (0,1)$.
We do not know whether estimate \eqref{32a} with $\rho = 1$ is sharp, but it is sufficient for our purposes.

\begin{follow} \label{f1}
Assume that $V$ satisfies \eqref{25} with $\rho \in (0,1)$.
 Then for each $\ell > 2/\rho$
there exists a constant $c_{\ell}$ such that
    \bel{32}
    \|P_q V P_q\|_\ell \leq c_{\ell} \lambda_q^{\frac{1}{\ell}-\frac{\rho}{2}}\,(1 + |\ln{\lambda_q}|)^{1/\ell}, \quad q \in {\mathbb Z}_+.
    \ee
    \end{follow}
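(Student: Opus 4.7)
The plan is to derive the bound by interpolating the $S_p$-Schatten estimate of Proposition~\ref{p11}(i) against the operator-norm estimate of Proposition~\ref{p2}, extracting the logarithmic factor by optimizing the interpolation exponent.

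First, for any $p > 2/\rho$ hypothesis \eqref{25} gives $V \in L^p(\rd)$, and a direct polar-coordinate computation yields
$$
\|V\|_{L^p(\rd)}^p \leq c^p \int_{\rd} \langle x \rangle^{-p\rho}\, dx = \frac{2\pi c^p}{p\rho - 2}.
$$
Proposition~\ref{p11}(i) therefore delivers $\|P_q V P_q\|_p \leq C_1 (p\rho - 2)^{-1/p}$ with $C_1$ uniform for $p$ ranging in any bounded subinterval of $(2/\rho, \infty)$, and independent of $q$. Combining this with the elementary Schatten interpolation
$$
\|T\|_\ell \leq \|T\|_p^{p/\ell}\, \|T\|^{1 - p/\ell}, \qquad p \leq \ell,
$$
(which is immediate from $s_j(T)^\ell \leq s_j(T)^p\,\|T\|^{\ell-p}$) and the bound $\|P_q V P_q\| \leq c_\infty \lambda_q^{-\rho/2}$ of Proposition~\ref{p2}, I obtain
$$
\|P_q V P_q\|_\ell \leq \frac{C_2}{(p\rho - 2)^{1/\ell}}\, \lambda_q^{-(\rho/2)(1 - p/\ell)}.
$$

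Second, I write $p = 2/\rho + \delta$, so the $\lambda_q$-exponent becomes $1/\ell - \rho/2 + \rho\delta/(2\ell)$ and the prefactor is bounded by $C_3\, \delta^{-1/\ell}$. I then choose $\delta = 2/(\rho \ln \lambda_q)$, which is legitimate for all $q$ with $\lambda_q$ large enough that $\delta$ lies in $(0, \ell - 2/\rho)$; this choice renders the extra factor $\lambda_q^{\rho \delta/(2\ell)} = e^{1/\ell}$ harmless and gives $\delta^{-1/\ell} \leq C_4 (\ln \lambda_q)^{1/\ell}$, producing exactly \eqref{32}. The finitely many small-$q$ values for which this choice is not permissible are absorbed into $c_\ell$, since for any fixed $p > 2/\rho$ Proposition~\ref{p11}(i) already ensures $\|P_q V P_q\|_\ell < \infty$.

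The main (mild) obstacle is the critical endpoint $p = 2/\rho$: the $L^p$-norm of a function satisfying \eqref{25} diverges precisely there, and it is the balance between this divergence and the gain $\lambda_q^{-\rho/2}$ from Proposition~\ref{p2} that forces the appearance of the logarithmic factor in \eqref{32}. Beyond this endpoint analysis the argument is routine Schatten interpolation.
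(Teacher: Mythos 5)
Your proof is correct, and it takes a genuinely different route from the paper. The paper reduces to $V(x)=\langle x\rangle^{-\rho}$, then combines the \emph{weak} Schatten bound of Proposition~\ref{p11}(ii) with $p=2/\rho$ (giving $s_j(P_q V P_q)\leq\mathcal{C}j^{-\rho/2}$) and the norm bound of Proposition~\ref{p2}, and splits the sum $\sum_j s_j^\ell$ at $N=[\lambda_q]$: the head is controlled by writing $s_j^\ell\leq s_1^{\ell-2/\rho}s_j^{2/\rho}$ and summing the resulting harmonic series, the tail by the weak-Schatten decay. You instead avoid the weak-Schatten machinery entirely: you use only the strong $L^p$ Schatten bound of Proposition~\ref{p11}(i), track the blow-up of $\|V\|_{L^p}$ as $p\downarrow 2/\rho$ explicitly, interpolate $S_p$ against the operator norm, and then optimize the free parameter $p=2/\rho+\delta$ by choosing $\delta\sim 1/\ln\lambda_q$. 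Both arguments extract the logarithm from the same tension between the critical integrability of $\langle x\rangle^{-\rho}$ and the $\lambda_q^{-\rho/2}$ decay of the norm; the paper resolves it by a cut-off in $j$, you by a near-critical choice of $p$. Your route is arguably more self-contained (no weak-$L^p$ or weak Schatten classes needed) at the small cost of carrying the $p$-dependence of the constants, which you do correctly.
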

\begin{proof}
 Similarly to the proof of Proposition \ref{p2} above,
an elementary variational argument shows that we may assume without loss of generality
that $V(x) = \langle x \rangle^{-\rho}$, $x \in \re^2$. Note also that in the proof of estimate \eqref{32} we may assume that $q$ is large enough since for any fixed $q$ it follows  from \eqref{31}. \\
For brevity, set $T_q : = P_q V P_q$, $q \in \Z_+$; by \cite{rw}, ${\rm rank}\,T_q = \infty$. By \eqref{g101} with $p = 2/\rho$, there exists a constant ${\mathcal C}$ such that
    \bel{g105}
s_j(T_q) \leq {\mathcal C} j^{-\rho/2}, \quad j \in {\mathbb N}, \quad q \in \Z_+.
    \ee
On the other hand, \eqref{32a} implies
\bel{g106}
s_1(T_q) \leq c_\infty \lambda_q^{-\rho/2},  \quad q \in \Z_+.
    \ee
    Fix $\ell > 2/\rho$. By \eqref{g105} -- \eqref{g106}, for any $N \in \N$, we have
    $$
    \|T_q\|_\ell^\ell = \sum_{j=1}^{\infty} s_j(T_q)^\ell = \sum_{j=1}^{N} s_j(T_q)^\ell + \sum_{j=N+1}^{\infty} s_j(T_q)^\ell \leq
    $$
    $$
    s_1(T_q)^{\ell - \frac{2}{\rho}}\sum_{j=1}^{N} s_j(T_q)^{\frac{2}{\rho}} + {\mathcal C}^\ell \sum_{j=N+1}^{\infty} j^{-\frac{\ell \rho}{2}} \leq
    c_\infty^{\ell - \frac{2}{\rho}} {\mathcal C}^{2/\rho} \lambda_q^{1-\frac{\ell \rho}{2}} \sum_{j=1}^{N} j^{-1} + {\mathcal C}^\ell \sum_{j=N+1}^{\infty} j^{-\frac{\ell \rho}{2}} \leq
    $$
    $$
    {\rm const.} \left(\lambda_q^{1-\frac{\ell \rho}{2}}(1 + \ln{N}) + N^{1-\frac{\ell \rho}{2}}\right)
    $$
    with a constant independent of $N$ and $q$. Assuming that $q$ is large enough, and choosing $N$ equal to the integer part of $\lambda_q$, we obtain \eqref{32}.
   \end{proof}
   \noindent
   {\em Remark}:
   Estimate \eqref{32}  should be regarded as an {\em a priori} estimate which is  sufficient for our purposes.

\subsection{Proof of Proposition \ref{p1}} \label{ss41}
Given estimate \eqref{32a} with $\rho \in (0,1)$, the proof of Proposition \ref{p1} is analogous to the one of \cite[Proposition 1.1]{prvb};
we include it just for the convenience of the reader. \\
In order to prove \eqref{23} it suffices to show that there exist $\tilde{C}>0$ and $s_0 \in \N$ such that $s \geq s_0$ implies
    \bel{f1a}
    \sigma(H) \cap [\lambda_s-B, \lambda_s + B] \subset \left(\lambda_s  - \tilde{C} \lambda_s^{-\rho/2}, \lambda_s  + \tilde{C} \lambda_s^{-\rho/2}\right).
    \ee
    Set $R_0(z) = (H_0 - z)^{-1}$, $z \in \C \setminus \sigma(H_0)$. By the Birman--Schwinger principle,
    $\lambda \in  \re \setminus \sigma(H_0)$ is an eigenvalue of $H$ if and only if $-1$ is an eigenvalue of $|V|^{1/2} R_0(\lambda) V^{1/2}$ where
    $$
    V^{1/2}(x) : = \left\{
    \begin{array} {l}
    |V(x)|^{1/2} \, {\rm sign}\, V(x) \quad {\rm if} \quad V(x) \neq 0,\\
    0  \quad {\rm if} \quad V(x) = 0.
    \end{array}
    \right.
    $$
    Hence, in order to prove \eqref{f1a}, it suffices to show that for some $\tilde{C} > 0$ and $s_0 \in \N$, the inequalities $s \geq s_0$ and
    \bel{f0}
    \tilde{C} \lambda_s^{-\rho/2} < |\lambda_s - \lambda| \leq B
    \ee
    imply
    \bel{f5}
    \||V|^{1/2} R_0(\lambda) |V|^{1/2}\| < 1.
    \ee
    Pick $m \in \N$ such that
    $\|V\|_{L^\infty(\rd)} \leq \lambda_m/2$.
     For $s \geq m$ write
    $$
    R_0(\lambda) = \sum_{k=s-m}^{s+m} (\lambda_k-\lambda)^{-1} P_k + \tilde{R}_0(\lambda; s,m).
    $$
    Then
     \bel{f6}
     \||V|^{1/2} R_0(\lambda) V^{1/2}\| \leq
      \sum_{k=s-m}^{s+m} |\lambda_k-\lambda|^{-1} \|P_k |V| P_k\| + \| |V|^{1/2} \tilde{R}_0(\lambda; s,m) |V|^{1/2}\|.
     \ee
     By the choice of $m$, we have
     \bel{f3}
     \| |V|^{1/2} \tilde{R}_0(\lambda; s,m) |V|^{1/2}\| < \frac{1}{2}.
     \ee
     On the other hand, by \eqref{32a} with $\rho \in (0,1)$, we have
     $$
     \sum_{k=s-m}^{s+m}  |\lambda_k-\lambda|^{-1} \|P_k |V| P_k\| \leq c_\infty \lambda_{s-m}^{-\rho/2} (2m+1) |\lambda_s - \lambda|^{-1}
     $$
     which implies
     \bel{f7}
     \sum_{k=s-m}^{s+m}  |\lambda_k-\lambda|^{-1} \|P_k |V| P_k\| < \frac{1}{2},
     \ee
     provided that the first inequality in \eqref{f0} holds with appropriate $\tilde{C}$. Now, \eqref{f5} follows from \eqref{f6}, \eqref{f3}, and \eqref{f7}.\\

\noindent
In the proof of Theorem \ref{th1}, we will need also the following
\begin{pr} \label{p6}
Assume that $V$ satisfies \eqref{25} with $\rho \in (0,1)$. Then there exists a constant $C' > 0$ such that for each $q \in {\mathbb Z}_+$ we have
\bel{41}
\sigma\left((I-P_q)H(I-P_q)_{|(I-P_q){\rm Dom}(H_0)}\right) \subset
\bigcup_{s \in {\mathbb Z}_+\setminus \{q\}} \left(\lambda_s - C'\lambda_s^{-\rho/2}, \lambda_s + C'\lambda_s^{-\rho/2}\right).
\ee
\end{pr}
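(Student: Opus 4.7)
The plan is to adapt the proof of Proposition \ref{p1} given in Subsection \ref{ss41} to the operator $\tilde{H} := (I-P_q)H(I-P_q)$ restricted to $(I-P_q)\,{\rm Dom}(H_0)$. I would set $Q := I-P_q$ and $\tilde{H}_0 := QH_0Q$, so that $\tilde{H} = \tilde{H}_0 + QVQ$ acts on $QL^2(\rd)$, and the spectrum of $\tilde{H}_0$ equals $\{\lambda_s : s \in \Z_+,\, s \neq q\}$ with spectral projections $P_s$ for $s \neq q$. A trivial norm-perturbation argument yields $\sigma(\tilde{H}) \subset \bigcup_{s \neq q}[\lambda_s - \|V\|_{L^\infty(\rd)}, \lambda_s + \|V\|_{L^\infty(\rd)}]$, so the task reduces to showing that there exist $s_0 \in \N$ and $C' > 0$, both independent of $q$, such that for every $s \geq s_0$ with $s \neq q$ any $\lambda$ satisfying $C'\lambda_s^{-\rho/2} < |\lambda - \lambda_s| \leq B$ lies in the resolvent set of $\tilde{H}$; the finitely many $s < s_0$ would then be absorbed by enlarging $C'$.

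Next, I would apply the Birman--Schwinger principle to $\tilde{H}$: with $V^{1/2}(x) := |V(x)|^{1/2}\,{\rm sign}\,V(x)$, it suffices to prove $\||V|^{1/2}Q(\tilde{H}_0-\lambda)^{-1}QV^{1/2}\| < 1$ for such $\lambda$. Choosing $m \in \N$ with $\|V\|_{L^\infty(\rd)} \leq \lambda_m/2$ (a choice depending only on $V$), I would decompose
\[
(\tilde{H}_0 - \lambda)^{-1} = \sum_{\substack{k \in [s-m,s+m]\cap\Z_+ \\ k \neq q}}(\lambda_k - \lambda)^{-1}P_k + \mathcal{R}_{s,m,q}(\lambda),
\]
where $\mathcal{R}_{s,m,q}(\lambda)$ collects the contributions from $|k-s| > m$, $k \neq q$. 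This is the direct analog of the splitting used in Subsection \ref{ss41}.

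For the local sum I would use $P_k Q = Q P_k = P_k$ when $k \neq q$, together with the identity $\||V|^{1/2} P_k V^{1/2}\| = \|P_k |V| P_k\|$, and invoke Proposition \ref{p2} to obtain $\|P_k |V| P_k\| \leq c_\infty \lambda_k^{-\rho/2}$. Combined with $|\lambda_k - \lambda| \geq |\lambda_s - \lambda|$ for $k \in [s-m, s+m]$ and the asymptotic $\lambda_{s-m}/\lambda_s \to 1$, the local part is controlled by a constant multiple of $\lambda_s^{-\rho/2}/|\lambda_s - \lambda|$, which falls below $1/2$ once $C'$ is large and $s \geq s_0$. The remainder $\mathcal{R}_{s,m,q}(\lambda)$ satisfies $\|\mathcal{R}_{s,m,q}(\lambda)\| \leq (Bm)^{-1}$, since every remaining Landau level in $\sigma(\tilde{H}_0)$ is at distance at least $Bm$ from $\lambda$; the factor $\|V\|_{L^\infty(\rd)}$ produced by the two potentials on either side is then compensated by the choice of $m$, giving a second contribution at most $1/2$.

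The only issue to monitor is the uniformity of $s_0$ and $C'$ in $q$. This is automatic: the choice of $m$ depends only on $\|V\|_{L^\infty(\rd)}$, the constant $c_\infty$ of Proposition \ref{p2} is independent of $q$, and the exclusion of the index $k = q$ from the local sum only makes the estimate smaller. Consequently I do not foresee any real obstacle; the argument is essentially a one-to-one transcription of the proof of Proposition \ref{p1} with $R_0(\lambda)$ replaced by $Q(\tilde{H}_0-\lambda)^{-1}Q$.
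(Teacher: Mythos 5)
Your proposal follows the paper's own intended approach: the paper defers entirely to the proof of Proposition \ref{p1} and only remarks that the argument is ``quite the same.'' Setting up the Birman--Schwinger principle for $\tilde H = QH_0Q + QVQ$ on $QL^2(\rd)$ with $Q=I-P_q$, splitting the reduced resolvent into a local sum over $k\in[s-m,s+m]$ with $k\ne q$ plus a tail, and invoking Proposition \ref{p2} for the local factors is exactly the intended adaptation, and you correctly note that the uniformity in $q$ is automatic. One small numerical slip: the tail Landau levels lie at distance at least $2B(m+1)-B=B(2m+1)=\lambda_m$ from $\lambda$, not merely $Bm$; it is $\|\mathcal{R}_{s,m,q}(\lambda)\|\le\lambda_m^{-1}$ combined with $\|V\|_{L^\infty(\rd)}\le\lambda_m/2$ that gives the required bound $\le 1/2$, whereas with your cruder $(Bm)^{-1}$ one gets $\|V\|_{L^\infty}(Bm)^{-1}\le(2m+1)/(2m)>1$, which does not close.
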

\noindent
 The proof of Proposition \ref{p6}  is quite the same as that of Proposition \ref{p1}, so that we omit the details.

\section{Proof of Theorem \ref{th1}} \label{s5}
\subsection{Passing from $V$ to its Weinstein average $\langle V \rangle$} \label{ss51}

Assume that $V \in L^{\infty}(\rd)$ and set
    \bel{aug1}
\langle V \rangle : = \sum_{s \in {\mathbb Z}_+} P_s V P_s
    \ee
where, a priori, the series converges strongly; this is the case if, for instance, $V = 1$ identically. If
    \bel{aug2}
\lim_{q \to \infty}\|P_q VP_q\| = 0
    \ee
 which, by Proposition \ref{p2}, is the case if $V$ satisfies \eqref{25} with $\rho > 0$, then the series in \eqref{aug1}
converges in norm. In order to check this, it suffices to show that $\left\{\langle V \rangle_q\right\}_{q \in {\mathbb Z}_+}$ with
$\langle V \rangle_q : = \sum_{s=0}^q P_s V P_s$, $q \in {\mathbb Z}_+$, is a Cauchy sequence in the uniform operator topology. By $P_j P_s = 0$ for $j \neq s$, we have
$$
\|\langle V \rangle_{q + m} - \langle V \rangle_q\| \leq \sup_{j \geq q+1} \|P_j V P_j\|, \quad q \in \Z_+, \quad m \in \N,
$$
which combined with \eqref{aug2},
implies the required property of the sequence $\left\{\langle V \rangle_q\right\}_{q \in {\mathbb Z}_+}$. Since
$$
\langle V \rangle = \frac{B}{\pi} \int_0^{\pi/B} e^{-it H_0} V e^{it H_0}\,dt,
$$
 we call $\langle V \rangle $ the  Weinstein average of $V$ (see \cite{wein}). Set $\langle H \rangle : = H_0 + \langle V \rangle$.

\begin{pr} \label{p7}
Under the hypotheses of Theorem \ref{th1} we have
\bel{51}
{\rm Tr}\, \varphi(\lambda_q^{\rho/2} (H - \lambda_q)) = {\rm Tr}\, \varphi(\lambda_q^{\rho/2} (\langle H \rangle  - \lambda_q)) + o(\lambda_q),
\quad q \to \infty.
     \ee
     for each $\varphi \in C_0^{\infty}(\re\setminus\{0\})$.
     \end{pr}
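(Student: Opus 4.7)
The plan is to connect $\langle H\rangle$ to $H$ by the linear homotopy $H_t:=\langle H\rangle+t(V-\langle V\rangle)$, $t\in[0,1]$, and to estimate $\tr\,\varphi_q(H)-\tr\,\varphi_q(\langle H\rangle)=\int_0^1\tr[\varphi_q'(H_t)K]\,dt$ (with $K:=V-\langle V\rangle$ and $\varphi_q(\cdot):=\varphi(\lambda_q^{\rho/2}(\cdot-\lambda_q))$) by means of the Hellmann--Feynman formula, exploiting the off-diagonal nature of $K$ with respect to the Landau decomposition $\{P_s\}$. The decisive algebraic fact will be $P_qKP_q=0$, which is built into the definition of the Weinstein average and will make the diagonal matrix element $\langle\psi_j,K\psi_j\rangle$ of a cluster eigenfunction $\psi_j$ of $H_t$ quadratically small in a Feshbach sense, rather than merely bounded.

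First I would fix $0<\delta<M$ with $\mathrm{supp}\,\varphi\subset[-M,-\delta]\cup[\delta,M]$, so that $\varphi_q$ is supported in the annulus $I_q:=\{\lambda\in\re:\delta\lambda_q^{-\rho/2}\leq|\lambda-\lambda_q|\leq M\lambda_q^{-\rho/2}\}$. Noting that $P_qH_tP_q=\lambda_qP_q+P_qVP_q$ is independent of $t$, minor variants of Propositions \ref{p1} and \ref{p6} (whose constants depend on $V$ only through $\|P_qVP_q\|$ and $\|V\|_\infty$, both uniform in $t\in[0,1]$) will yield $\sigma(H_t)\cap(\lambda_q-B,\lambda_q+B)\subset(\lambda_q-C\lambda_q^{-\rho/2},\lambda_q+C\lambda_q^{-\rho/2})$ and $\|(Q_qH_tQ_q-\mu)^{-1}\|=O(1)$ for $\mu\in I_q$, both uniformly in $t$; here $Q_q:=I-P_q$. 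In particular $\varphi_q(H_t)$ is finite rank and smooth in $t$ for $q$ large, which justifies the Duhamel identity above.

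Next I would enumerate the $N_q(t)$ eigenvalues of $H_t$ in $I_q$ as $\mu_j(t)$ with normalized eigenfunctions $\psi_j=u_j+v_j$, where $u_j:=P_q\psi_j$, $v_j:=Q_q\psi_j$. Since $Q_qH_tP_q=tQ_qVP_q$, the eigenvalue equation gives $v_j=-t(Q_qH_tQ_q-\mu_j)^{-1}Q_qVP_qu_j$, hence $\|v_j\|=O(\|Q_qVP_q\|)=O(\|P_qV^2P_q\|^{1/2})$, the last equality via $\|Q_qVP_q\|^2=\|P_qVQ_qVP_q\|\leq\|P_qV^2P_q\|$. Combining $P_qKP_q=0$ with $\langle V\rangle P_q=P_qVP_q$, a short computation reduces the cross term to $\langle u_j,Kv_j\rangle=\langle u_j,P_qVQ_qv_j\rangle$, and therefore
\[
|\langle\psi_j,K\psi_j\rangle|\leq 2\|P_qVQ_q\|\,\|v_j\|+\|K\|\,\|v_j\|^2=O(\|P_qV^2P_q\|),
\]
uniformly in $j,t$. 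Applying Proposition \ref{p2} to $V^2$, which satisfies $|V(x)|^2\leq C\langle x\rangle^{-2\rho}$, one verifies in each of the three regimes $2\rho<1$, $=1$, $>1$ that $\|P_qV^2P_q\|=o(\lambda_q^{-\rho/2})$, using $\rho\in(0,1)$.

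Finally, \eqref{g105} gives $s_j(P_qVP_q)\leq\mathcal{C}j^{-\rho/2}$, so $N_q(0)\leq(\mathcal{C}/\delta)^{2/\rho}\lambda_q=O(\lambda_q)$; the bound above also implies $|\mu_j(t)-\mu_j(0)|=o(\lambda_q^{-\rho/2})$, so for $q$ large no eigenvalue crosses the inner boundary of $I_q$ and $N_q(t)\equiv N_q=O(\lambda_q)$. Putting everything together,
\[
\bigl|\tr\,\varphi_q(H)-\tr\,\varphi_q(\langle H\rangle)\bigr|\leq\lambda_q^{\rho/2}\|\varphi'\|_\infty\cdot N_q\cdot\sup_{j,t}|\langle\psi_j,K\psi_j\rangle|=\lambda_q^{\rho/2}\cdot O(\lambda_q)\cdot o(\lambda_q^{-\rho/2})=o(\lambda_q),
\]
which is \eqref{51}. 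The main obstacle I foresee is obtaining the quadratic cancellation $|\langle\psi_j,K\psi_j\rangle|=O(\|P_qV^2P_q\|)$ rather than the naive $O(\|K\|)=O(1)$: it requires both the algebraic property $P_qKP_q=0$ of the Weinstein average and the Feshbach-type elliptic estimate $\|v_j\|=O(\|P_qV^2P_q\|^{1/2})$ for the off-level component of cluster eigenfunctions; without both, the trace difference would only be bounded by $O(\lambda_q^{1+\rho/2})$, which is insufficient.
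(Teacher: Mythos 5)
Your proposal is correct but proceeds by a genuinely different route from the paper. The paper's proof of Proposition \ref{p7} uses the Helffer--Sj\"ostrand formula to represent $\varphi(\lambda_q^{\rho/2}(H-\lambda_q))$ as a contour integral of the resolvent, then expands $(H-z)^{-1}$ via the iterated resolvent identity \eqref{a5} to order $\loo>2/\rho$, and estimates each of the three error terms $T_1,T_2,T_3$ by Schatten--von Neumann norm bounds (Corollaries \ref{f1}, \ref{f2}) together with the Schur--Feshbach formula for the resolvent. Your argument instead interpolates linearly between $\langle H\rangle$ and $H$, applies the trace Duhamel/Hellmann--Feynman identity, and exploits exactly the same structural fact driving the paper's proof --- the cancellation $P_q(V-\langle V\rangle)P_q=0$ --- but at the level of eigenfunctions of $H_t$ rather than at the level of resolvent expansions. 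The quadratic gain from the Feshbach decomposition $\psi_j=u_j+v_j$ with $\|v_j\|=O(\|P_qV^2P_q\|^{1/2})$, plus the eigenvalue count $N_q=O(\lambda_q)$ from \eqref{g105}, replaces the paper's somewhat heavier Schatten-class bookkeeping. The result is the same $o(\lambda_q)$ error, but the mechanism is more transparently tied to first-order perturbation theory. Your verification that $\|P_qV^2P_q\|=o(\lambda_q^{-\rho/2})$ in all three regimes $2\rho\lessgtr 1$ is correct and is the crux.

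Two points where you should be more careful, neither fatal. First, the uniform-in-$t$ versions of Propositions \ref{p1} and \ref{p6} are \emph{not} a purely formal consequence of the originals: the Birman--Schwinger argument there requires the perturbation to be a multiplication operator in order to form $|V|^{1/2}R_0(\lambda)V^{1/2}$, and $W_t:=(1-t)\langle V\rangle+tV$ is not one. The fix is to rewrite $H_t=H_t^0+tV$ with $H_t^0:=H_0+(1-t)\langle V\rangle$ block-diagonal, note that $\sigma(H_t^0)$ lies in $\bigcup_s(\lambda_s-C\lambda_s^{-\rho/2},\lambda_s+C\lambda_s^{-\rho/2})$ uniformly in $t$ (since $\|(1-t)P_sVP_s\|\le\|P_sVP_s\|$), and then run Birman--Schwinger for the multiplication perturbation $tV$ relative to $H_t^0$; the relevant Birman--Schwinger norm for the $Q_q$-compressed problem is then $\le\sum_{s\ne q}\|P_s|V|P_s\|\,{\rm dist}(\lambda,\sigma(\lambda_s+(1-t)P_sVP_s))^{-1}=O(\lambda_q^{-\rho/2}\ln\lambda_q)=o(1)$. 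Second, your closing counting argument ($N_q(t)\equiv N_q$) is slightly circular as stated: the displacement bound $|\dot\mu_j|=o(\lambda_q^{-\rho/2})$ is itself derived from the Feshbach estimate, which needs $\mu_j$ in the resolvent region. To close the loop, note that the Feshbach bound (and hence the displacement bound) holds for \emph{every} eigenvalue of $H_t$ in $(\lambda_q-B/2,\lambda_q+B/2)$, not merely those in $I_q$, so that continuous eigenvalue branches in that window move by $o(\lambda_q^{-\rho/2})$; combined with the uniform cluster bound this shows $\{j:\mu_j(t)\in I_q \text{ for some } t\}$ injects into $\{j:\mu_j(0)\in I_q'\}$ with $I_q'$ a marginally enlarged annulus, whose cardinality is $O(\lambda_q)$.
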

     \begin{proof}
     First, let us write the difference of the traces in \eqref{51} according to the Helffer--Sj\"ostrand formula (see the original
     works \cite{dyn, helsj},
     or the monographs \cite[Section 2.2]{davies}, \cite[Chapter 8]{dsj}). Let $\varphi \in C_0^{\infty}(\re\setminus\{0\})$, and let
     $\tilde{\varphi} \in C_0^{\infty}(\rd)$ be an almost analytic continuation of $\varphi$ which satisfies
     \bel{a1}
     {\rm supp}\,\tilde{\varphi} \subset ((a_-, b_-) \cup (a_+, b_+)) \times (-c,c)
     \ee
     with $-\infty < a_- < b_- < 0 < a_+ < b_+ < \infty$, and $0 < c < \infty$, as well as
     \bel{a2}
     |\psi(x,y)| \leq C_N |y|^N, \quad (x,y) \in \rd, \quad N>0,
     \ee
     where
     $\psi : = \frac{1}{2} \left(\frac{\partial \tilde{\varphi}}{\partial x} + i \frac{\partial \tilde{\varphi}}{\partial y}\right)$.
     For $(x,y) \in \rd$ set $z = x+ iy$ and
     \bel{a2a}
     \psi_q(x,y) : = \lambda_q^{\rho/2} \psi(\lambda_q^{\rho/2}(x-\lambda_q), \lambda_q^{\rho/2}y), \quad  q \in {\mathbb Z}_+.
    \ee
     Then the Helffer--Sj\"ostrand formula yields
     $$
     \varphi(\lambda_q^{\rho/2} (H - \lambda_q)) = \frac{1}{\pi} \int_{\rd} \psi(x,y) (\lambda^{\rho/2}(H -\lambda_q) - z)^{-1} dx dy =
      \frac{1}{\pi} \int_{\rd} \psi_q(x,y) (H - z)^{-1} dx dy.
      $$
      Similarly,
      $$
     \varphi(\lambda_q^{\rho/2} (\langle H \rangle - \lambda_q)) =
     \frac{1}{\pi} \int_{\rd} \psi_q(x,y) (\langle H \rangle - z)^{-1} dx dy.
      $$
      Further, let $\lo$ be the smallest integer (strictly) greater than $2/\rho$. Write the iterated resolvent identity
      \bel{a5}
      (H-z)^{-1} = \sum_{s=0}^{\lo -1} (-1)^s ((H_0 - z)^{-1}V)^s (H_0-z)^{-1} + (-1)^{\lo} ((H_0 - z)^{-1}V)^{\lo} (H-z)^{-1}.
      \ee
      In the sequel, assume that $q \in \Z_+$ is so large that $-2B < a_- \lambda_q^{-\rho/2}$ and $b_+ \lambda_q^{-\rho/2} < 2B$ (see \eqref{a1} and \eqref{a2a}).
      Then the sum on the r.h.s of \eqref{a5} is holomorphic on the support of $\psi_q$. Therefore,
      $$
      \varphi(\lambda_q^{\rho/2} (H - \lambda_q)) = \frac{(-1)^{\lo}}{\pi} \int_{\rd} \psi_q(x,y) ((H_0 - z)^{-1}V)^{\lo} (H-z)^{-1} dx dy.
      $$
      Similarly,
      $$
      \varphi(\lambda_q^{\rho/2} (\langle H \rangle - \lambda_q)) =
      \frac{(-1)^{\lo}}{\pi} \int_{\rd} \psi_q(x,y) ((H_0 - z)^{-1}\langle V \rangle)^{\lo} (\langle H \rangle-z)^{-1} dx dy.
      $$
      Thus we get
      $$
      (-1)^{\lo} \pi \,{\rm Tr}\,\left(\varphi(\lambda_q^{\rho/2} (H - \lambda_q)) - \varphi(\lambda_q^{\rho/2} (\langle H \rangle - \lambda_q))\right) =
      $$
      $$
      \int_{\rd} \psi_q(x,y)\left(((H_0 - z)^{-1}V)^{\lo} (H-z)^{-1}- ((H_0 - z)^{-1}\langle V \rangle)^{\lo} (\langle H \rangle-z)^{-1}\right) dx dy =
      $$
      \bel{a8}
      \sum_{j=1,2,3} T_j(q)
      \ee
      where
      $$
      T_1(q) : = {\rm Tr}\,\int_{\rd} \psi_q(x,y) \left(((H_0 - z)^{-1}V)^{\lo} - (\lambda_q -z)^{-\lo}(P_q V)^{\lo}\right) (H-z)^{-1} dx dy,
      $$
      $$
      T_2(q) : = {\rm Tr}\,\int_{\rd} \psi_q(x,y) \left((\lambda_q -z)^{-\lo}(P_q \langle V \rangle)^{\lo} - ((H_0 - z)^{-1}\langle V \rangle)^{\lo}\right) ( \langle H \rangle-z)^{-1} dx dy,
      $$
      $$
      T_3(q) : = {\rm Tr}\,\int_{\rd} \psi_q(x,y) (\lambda_q -z)^{-\lo}(P_q V)^{\lo}\left((H-z)^{-1}-P_q(\langle H \rangle-z)^{-1}\right) dx dy.
      $$
      Writing $T_3(q)$, we have taken into account that
      $(P_q \langle V \rangle)^{\lo} = (P_q  V P_q)^{\lo} = (P_q  V)^{\lo} P_q$.
      Hence, in order to prove \eqref{51} it suffices to show that
      $$
      T_j(q) = o(\lambda_q), \quad j=1,2,3, \quad q \to \infty.
      $$
       To this end we need some preliminary estimates.
      Namely, we will show that if $V \in L^p(\rd)$, $p \in [2,\infty)$,  $(x,y) \in {\rm supp}\,\psi_q$, and $q \in {\mathbb Z}_+$,
      then $(I-P_q) (H_0-z)^{-1}V \in S_p$,
      and there exists a constant $c_p$ such that
      \bel{a9}
      \sup_{(x,y) \in {\rm supp}\,\psi_q} \, \|(I-P_q) (H_0-z)^{-1}V\|_p \leq c_p \|V\|_{L^p(\rd)}
      \ee
      for sufficiently large $q$. Assume at first $V \in L^{\infty}(\rd)$. Then, evidently,
      \bel{a10}
      \|(I-P_q) (H_0-z)^{-1}V\| \leq \sup_{s \in \Z_+\setminus \{q\}} |\lambda_s - z|^{-1} \|V\|_{L^{\infty}(\rd)}.
      \ee
      Note that if $(x,y) \in {\rm supp}\,\psi_q$, then
      \bel{a12}
      |x-\lambda_q| = O(\lambda_q^{-\rho/2}), \quad q \to \infty,
      \ee
      (see \eqref{a1} and  \eqref{a2a}). Hence, for $q$ large enough we have
      \bel{a11}
      \sup_{s \in \Z_+\setminus \{q\}} |\lambda_s - z|^{-1} \leq B^{-1}, \quad  (x,y) \in {\rm supp}\,\psi_q.
      \ee
      Assume now that $V \in L^2(\rd)$. Then we have
      \bel{a13}
       \|(I-P_q) (H_0-z)^{-1}V\|_2^2 = \sum_{s \in \Z_+\setminus \{q\}} |\lambda_s - z|^{-2} {\rm Tr} P_s |V|^2 P_s =
       \frac{B}{2\pi} \sum_{s \in \Z_+\setminus \{q\}} |\lambda_s - z|^{-2}\|V\|^2_{L^2(\rd)}
       \ee
       (see \eqref{g102}). Taking into account again \eqref{a12}, we find that for $q$ large enough we have
       \bel{a14}
       \sum_{s \in \Z_+\setminus \{q\}} |\lambda_s - z|^{-2} \leq \frac{\pi^2}{3B^2}, \quad  (x,y) \in {\rm supp}\,\psi_q.
       \ee
       Interpolating between \eqref{a10} and \eqref{a13} (see \cite[Theorem 3.1]{bs}), and bearing in mind \eqref{a11} and \eqref{a14}, we obtain \eqref{a9}.
       Further, since
       $$
       (H_0-z)^{-1}V = (\lambda_q - z)^{-1} P_qV + (I-P_q) (H_0-z)^{-1}V,
       $$
       elementary combinatorial arguments yield the estimate
       \bel{a15}
       |T_1(q)| \leq \sum_{m=0}^{\lo - 1} \binom{\lo}{m} T_{1, m}(q)
       \ee
       where
       \bel{a16}
       T_{1, m}(q) : = \int_{\rd} |\psi_q(x,y)| \|(I-P_q)(H_0-z)^{-1}V\|_{\lo}^{\lo - m}|\lambda_q-z|^{-m} \|P_q V\|_{\lo}^m \|(H-z)^{-1}\| dx dy.
       \ee
       Our next goal is to show that
       \bel{a19}
       T_{1, m}(q) = o(\lambda_q), \quad q \to \infty, \quad m = 0, \ldots, \lo - 1.
       \ee
       To this end we apply:
       \begin{itemize}
       \item estimate \eqref{a2} with $N=2$ in order to get
       $$
       \sup_{(x,y) \in {\rm supp}\,\psi_q} |\psi_q(x,y)| \leq C_2\lambda_q^{3\rho/2} y^2;
       $$
       \item estimate \eqref{a9} in order to handle $ \|(I-P_q)(H_0-z)^{-1}V\|_{\lo}$;
       \item the fact that, due to \eqref{a1} and \eqref{a2a}, we have
       \bel{a23}
        \sup_{(x,y) \in {\rm supp}\,\psi_q} |\lambda_q - z|^{-1} = O(\lambda_q^{\rho/2});
        \ee
       \item estimate \eqref{b1} in order to handle $\|P_q V\|_{\lo}$;
       \item the standard resolvent estimate $\|(H-z)^{-1}\| \leq |y|^{-1}$;
       \item the elementary estimate
       \bel{a25}
       \int_{{\rm supp}\,\psi_q} |y| dxdy = O(\lambda_q^{-3\rho/2}).
       \ee
        \end{itemize}
        As a result, we obtain
        \bel{a17}
        T_{1,m}(q ) \leq {\rm const.} \, \lambda_q^{m\rho/2}, \quad m = 0, \ldots, \lo - 1,
        \ee
        with a constant independent of $q$. We have  $\lambda_q^{m\rho/2} = o(\lambda_q)$ as $q \to \infty$ in all the cases except the one where $2/\rho$
        is an integer, and $m = \lo - 1 = 2/\rho$. In this exceptional case however we have $m\geq3$ and in all the terms of
        $$
        \left((H_0-z)^{-1} V\right)^{\lo} - (\lambda_q -z)^{-\lo}(P_q V)^{\lo}
        $$
        which contain $m = \lo - 1$ factors of the type $(\lambda_q -z)^{-1} P_q V$, at least two
        of these factors are neighbours. Therefore, in this exceptional case we can replace $\|P_q V\|_{\lo}^{m}$ by
        $\|P_q V\|_{\lo}^{m-1} \|P_q V P_q\|_{\lo}$ in \eqref{a16}, apply \eqref{32} with $\ell = \lo = m+ 1$, and obtain
        \bel{a18}
        T_{1,m}(q ) = O\left(\lambda_q^{1 + \frac{1}{m+1} - \frac{1}{m}} (\ln{\lambda_q})^\frac{1}{m+1}\right)= o(\lambda_q), \quad q \to \infty.
        \ee
        Now, \eqref{a17} and \eqref{a18} entail \eqref{a19},
         which combined with \eqref{a15} implies
        \bel{a42}
        |T_1(q)| = o(\lambda_q), \quad q \to \infty.
        \ee
        Similarly, we get
        \bel{a20}
        |T_2(q)| = o(\lambda_q), \quad q \to \infty.
        \ee
        Let us now turn to $T_3(q)$. First, note that due to the cyclicity of the trace, we have
        $$
        T_3(q) =
        $$
        $$
        {\rm Tr}\,\int_{\rd} \psi_q(x,y) (\lambda_q -z)^{-\lo} (P_q V)^{\lo} \left((H-z)^{-1} P_q - P_q( P_q V P_q + \lambda_q - z)^{-1}P_q\right) dxdy.
        $$
        Next, we need the Schur--Feshbach  formula (see the original works  \cite{schur, f}, or a contemporary exposition available, for instance,
        in \cite[Appendix]{bhs}). According to this formula,
        $$
        (H-z)^{-1} =
        $$
        $$
        P_q R_{\parallel}(z) P_q - P_q R_{\parallel}(z) V(I-P_q) R_{\perp}(z) - (I-P_q) R_{\perp}(z) V P_q R_{\parallel}(z) +
        $$
        \bel{a100}
        (I-P_q)\left(R_{\perp}(z) + R_{\perp}(z)V P_q R_{\parallel}(z) V(I-P_q) R_{\perp}(z)\right)
        \ee
         where $R_{\perp}(z)$ is the inverse of the operator $(I-P_q)(H-z)(I-P_q)$ defined on $(I-P_q){\rm Dom}\,H_0$,
        and considered as an operator in the Hilbert space $(I-P_q) L^2(\rd)$, while $R_{\parallel}(z)$ is the inverse of the operator
        $$
        P_q V P_q - P_q V (I-P_q) R_{\perp}(z) V P_q + \lambda_q - z
        $$
        considered as an operator in the Hilbert space $P_q L^2(\rd)$.
        Applying \eqref{a100} and the resolvent identity, we obtain
        $$
        (H-z)^{-1} P_q - P_q(\lambda_q + P_q V P_q - z)^{-1}P_q =
        $$
        $$
        R_{\parallel} P_q V (I-P_q) R_{\perp}(z) V P_q (P_q V P_q + \lambda_q - z)^{-1} - (I-P_q) R_{\perp}(z) V R_{\parallel}(z) P_q
        $$
        Thus,
        \bel{a33}
        T_3(q) = T_{3,1}(q) + T_{3,2}(q)
        \ee
        where
        $$
        T_{3,1}(q) : =
        $$
        $$
        {\rm Tr}\,\int_{\rd}\frac{\psi_q(x,y)}{(\lambda_q -z)^{\lo}} (P_q V P_q)^{\lo} R_{\parallel}(z) P_q V (I-P_q) R_{\perp}(z) V P_q (P_q V P_q + \lambda_q - z)^{-1}dxdy,
         $$
         and
         $$
        T_{3,2}(q) : =
        - {\rm Tr}\,\int_{\rd} \psi_q(x,y) (\lambda_q -z)^{-\lo} (P_q V P_q)^{\lo - 1} P_q V
        (I-P_q) R_{\perp}(z) V P_q R_{\parallel}(z) P_q dxdy.
         $$
         We have
         $$
         |T_{3,1}(q)| \leq
         $$
         \bel{a21}
         \int_{\rd} \frac{|\psi_q(x,y)|}{|\lambda_q -z|^{\lo}} \|P_q V P_q\|_{\lo}^{\lo} \|R_{\parallel}(z)\| \|P_q V\| \|V P_q \| \| R_{\perp}(z)\|
          \|(P_q V P_q + \lambda_q - z)^{-1}\| dxdy.
          \ee
          In order to show that
          \bel{a31}
          |T_{3,1}(q)| = o(\lambda_q), \quad q \to \infty,
          \ee
          we apply:
          \begin{itemize}
          \item estimate \eqref{a2} with $N=3$ in order to get
       $$
       \sup_{(x,y) \in {\rm supp}\,\psi_q} |\psi_q(x,y)| \leq C_3\lambda_q^{2\rho} |y|^3;
       $$
       \item estimate \eqref{a23} in order to handle $|\lambda_q - z|^{-\lo}$;
       \item estimate \eqref{32} with $\ell = \lo$ in order to handle $\|P_q V P_q\|_{\lo}$;
       \item the standard resolvent estimates
       \bel{a28}
        \|R_{\parallel}(z)\| \leq |y|^{-1}, \quad \|(P_q V P_q + \lambda_q - z)^{-1}\| \leq |y|^{-1};
        \ee

       \item estimate \eqref{32a} in order to conclude that
       \bel{a29}
       \|P_q V\| \|V P_q\| = \|P_q V^2 P_q\| \leq c_\infty \left\{
    \begin{array} {l}
    \lambda_q^{-\rho} \quad {\rm if} \quad \rho \in \left(0,\frac{1}{2}\right),\\
    \lambda_q^{-1/2}|\ln{\lambda_q}| \quad {\rm if} \quad \rho = \frac{1}{2},\\
    \lambda_q^{-1/2} \quad {\rm if} \quad \rho \in \left(\frac{1}{2},1\right),
    \end{array}
    \right.
    \quad q \in {\mathbb Z}_+;
       \ee
        \item the elementary estimate \eqref{a25};
        \item Proposition \ref{p6} in order to deduce the estimate
         \bel{a30}
         \sup_{(x,y) \in {\rm supp}\,\psi_q} \|R_{\perp}(z)\| = O(1), \quad q \to \infty;
         \ee
          \end{itemize}
          As a result, we obtain
          \bel{a40}
          |T_{3,1}(q)| = O\left(\Phi_{1,\rho}(\lambda_q)\right), \quad q \to \infty,
          \ee
          where
          $$
          \Phi_{1,\rho}(t) : = \left\{
    \begin{array} {l}
    t^{1-\rho/2} |\ln{t}| \quad {\rm if} \quad \rho \in \left(0,\frac{1}{2}\right),\\
    t^{3/4}(\ln{t})^2 \quad {\rm if} \quad \rho = \frac{1}{2},\\
    t^{(1+\rho)/2} |\ln{t}|\quad {\rm if} \quad \rho \in \left(\frac{1}{2},1\right),
    \end{array}
    \right.
    \quad t > 0.
    $$
    Now, \eqref{a40}  implies \eqref{a31}.
    Finally, we have
    $$
         |T_{3,2}(q)| \leq
         \int_{\rd} |\psi_q(x,y)| |\lambda_q -z|^{-\lo} \|P_q V P_q\|_{\lo}^{\lo - 1}  \|P_q V \|_{\lo}\|R_{\perp}(z)\| \|P_q V\|  \|R_{\parallel}(z)\|
           dxdy
          $$
          In order to show that
          \bel{a32}
          |T_{3,2}(q)| = o(\lambda_q), \quad q \to \infty,
          \ee
          we apply \eqref{a2} with $N=2$, \eqref{a23}, \eqref{32}, \eqref{b1}, the first estimate in \eqref{a28}, \eqref{a30}, \eqref{a25},
           and \eqref{a29}. Thus we obtain
           \bel{a41}
           |T_{3,2}(q)| = O\left(\lambda_q^{\frac{(\lo - 1)}{\lo} + \frac{\rho}{2}} (\ln{\lambda_q})^{\frac{(\lo - 1)}{\lo}} \Phi_{2,\rho}(\lambda_q)\right), \quad q \to \infty,
           \ee
           where
           $$ \Phi_{2,\rho}(t) : = \left\{
    \begin{array} {l}
    t^{-\rho/2} \quad {\rm if} \quad \rho \in \left(0,\frac{1}{2}\right),\\
    t^{-1/4}|\ln{t}|^{1/2} \quad {\rm if} \quad \rho = \frac{1}{2},\\
    t^{-1/4} \quad {\rm if} \quad \rho \in \left(\frac{1}{2},1\right),
    \end{array}
    \right. \quad t > 0,
           $$
           and \eqref{a32} follows from \eqref{a41}.\\
           \noindent
             Now
           the combination of \eqref{a8}, \eqref{a42}, \eqref{a20}, \eqref{a33}, \eqref{a31}, and \eqref{a32} yields \eqref{51}.
          \end{proof}

\subsection{Passing to individual Berezin--Toeplitz operators} \label{ss52}
\begin{pr} \label{p8}
Assume  the hypotheses of Theorem \ref{th1}. Then for each $\varphi \in C_0^{\infty}(\re\setminus\{0\})$ there exists $q_0 \in {\mathbb Z}_+$ such that
\bel{52}
 {\rm Tr}\, \varphi(\lambda_q^{\rho/2} (\langle H \rangle  - \lambda_q)) =  {\rm Tr}\, \varphi(\lambda_q^{\rho/2} P_q V P_q)
\ee
     for $q \geq q_0$.
     \end{pr}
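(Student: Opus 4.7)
The plan is to diagonalize $\langle H\rangle$ with respect to the Landau decomposition and show that $\varphi(\lambda_q^{\rho/2}(\langle H\rangle-\lambda_q))$ is concentrated in the $q$th block. Since $H_0$ commutes with each $P_s$ and the series $\langle V\rangle = \sum_j P_jVP_j$ converges in operator norm (as observed just before the statement), every $P_s$ commutes with $\langle H\rangle$. Hence
$$
\langle H\rangle \;=\; \bigoplus_{s\in\Z_+} \bigl(\lambda_s I + P_sVP_s\bigr)\big|_{P_sL^2(\rd)},
$$
and the Borel functional calculus gives the corresponding block decomposition of $\varphi(\lambda_q^{\rho/2}(\langle H\rangle-\lambda_q))$. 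The $s=q$ block equals $\varphi(\lambda_q^{\rho/2}P_qVP_q)|_{P_qL^2(\rd)}$; its full-space trace coincides with $\tr\,\varphi(\lambda_q^{\rho/2}P_qVP_q)$ because $\varphi(0)=0$ forces $\varphi(\lambda_q^{\rho/2}P_qVP_q)$ to vanish on $(I-P_q)L^2(\rd)$. Thus \eqref{52} reduces to showing that every block with $s\neq q$ produces the zero operator once $q\geq q_0$.

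Fix $0<a<b$ with ${\rm supp}\,\varphi\subset[-b,-a]\cup[a,b]$. The spectrum of the rescaled $s$th block is contained in the interval centered at $\lambda_q^{\rho/2}(\lambda_s-\lambda_q)=2B(s-q)\lambda_q^{\rho/2}$ with radius $\lambda_q^{\rho/2}\|P_sVP_s\|$. By Proposition \ref{p2} this radius is bounded by $c_\infty(\lambda_q/\lambda_s)^{\rho/2}$, so the distance of the interval from $0$ is at least
$$
\delta_{q,s} \;:=\; \lambda_q^{\rho/2}\bigl(2B|s-q|-c_\infty\lambda_s^{-\rho/2}\bigr).
$$
It suffices to prove that $\delta_{q,s}>b$ for every $s\neq q$ once $q$ is large enough; then the spectrum avoids ${\rm supp}\,\varphi$, $\varphi$ annihilates the block, and \eqref{52} follows.

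The main obstacle is a uniform-in-$s$ lower bound on $\delta_{q,s}$, since $c_\infty(\lambda_q/\lambda_s)^{\rho/2}$ can be comparable to the displacement when $s$ is small relative to $q$. A two-case split handles this cleanly. For $s\geq q+1$ one has $\lambda_s^{-\rho/2}\leq\lambda_q^{-\rho/2}$ and $|s-q|\geq 1$, so the bracket is $\geq 2B-c_\infty\lambda_q^{-\rho/2}\geq B$ for $q$ large. For $s\leq q-1$, set $k:=q-s\geq 1$ and pick a fixed $K\in\N$ with $2BK-c_\infty B^{-\rho/2}\geq B$: when $k\geq K$, using $\lambda_s\geq B$, the bracket is $\geq 2Bk-c_\infty B^{-\rho/2}\geq B$; for the finitely many $k\in\{1,\ldots,K-1\}$ one has $\lambda_s=B(2q-2k+1)\to\infty$ with $q$, so $c_\infty\lambda_s^{-\rho/2}\to 0$ and the bracket is eventually $\geq B$. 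Combining these cases, $\delta_{q,s}\geq B\lambda_q^{\rho/2}\to\infty$ uniformly in $s\neq q$, so $\delta_{q,s}>b$ for all $q\geq q_0$, which concludes the proof.
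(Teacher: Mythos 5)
Your proof is correct and takes essentially the same route as the paper: both rely on the block decomposition of $\langle H\rangle$ along the Landau projections, the observation that $\varphi$ vanishes near $0$ so only the $s=q$ block can contribute, and Proposition \ref{p2} to push the rescaled spectra of the $s\neq q$ blocks off the support of $\varphi$ for large $q$. The paper phrases this as a uniform bound on the inverses $(\lambda_s-\lambda_q+P_sVP_s)^{-1}$ using the crude bound $\|P_sVP_s\|\leq\|V\|_{L^\infty}$ for $|s-q|$ large and Proposition \ref{p2} only for the finitely many remaining $s$, while you use the sharper $\|P_sVP_s\|\leq c_\infty\lambda_s^{-\rho/2}$ throughout and track the distance of the spectra from $0$ directly; these are logically equivalent bookkeepings of the same idea.
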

     \begin{proof}
     We have
     \bel{c1}
      {\rm Tr}\, \varphi(\lambda_q^{\rho/2} (\langle H \rangle  - \lambda_q)) =  \sum_{s \in \Z_+}{\rm Tr}\, \varphi(\lambda_q^{\rho/2}(\lambda_s - \lambda_q + P_s V P_s))
     \ee
      Due to the presence of the  factor $\lambda_q^{\rho/2}$ in the traces ${\rm Tr}\, \varphi(\lambda_q^{\rho/2}(\lambda_s - \lambda_q + P_s V P_s))$, $s \in \Z_+$, it suffices to show that there exists $q_0$ such that for $q \geq q_0$ the operators
     \bel{c2}
     \lambda_s - \lambda_q + P_s V P_s = 2B(s-q) + P_s V P_s, \quad s \neq q,
     \ee
     are invertible, and
     \bel{c3}
     \sup_{q \geq q_0} \sup_{s \in \Z_+ \setminus \{q\}} \|( \lambda_s - \lambda_q + P_s V P_s)^{-1}\| < \infty.
     \ee
     Since $\|P_s V P_s\| \leq \|V\|_{L^{\infty}(\rd)}$, $s \in \Z_+$, there exists $m \in \N$ such that the operators in \eqref{c2} with $|s-q| > m$ are invertible, and
     \bel{c4}
     \sup_{q \in \Z_+} \, \sup_{s \in \Z_+ : |s-q| > m} \|( \lambda_s - \lambda_q + P_s V P_s)^{-1}\| < \infty.
     \ee
     On the other hand, Proposition \ref{p2} implies that for any fixed $j \in \Z$ we have
     $$
     \lim_{q \to \infty} \|P_{q+j} V P_{q+j}\| = 0.
     $$
     Therefore, there exists $q_0 \in \Z_+$ such that
     the operators in \eqref{c2} with $|s-q| \leq m$ are invertible for $q \geq q_0$, and
     \bel{c5}
     \sup_{q \geq q_0} \, \max_{s \in \Z_+ \setminus \{q\}: |s-q| \leq m} \|( \lambda_s - \lambda_q + P_s V P_s)^{-1}\| < \infty.
     \ee
     Putting together \eqref{c4} and \eqref{c5}, we obtain \eqref{c3}, and hence \eqref{52}.
\end{proof}

\subsection{Passing from $P_q V P_q$ to $\Op(\V_B * \delta_{\sqrt{2q+1}})$} \label{ss53}
Introduce the operator $\Op(\V_B * \delta_{\sqrt{2q+1}})$. We have
$$
\widehat{\V_B * \delta_{\sqrt{2q+1}}}(\zeta) = \widehat{\V_B}(\zeta) J_0(\sqrt{2q+1}|\zeta|), \quad \zeta \in \rd.
$$
Note that $J_0$ is an entire function, and $|J_0(r)| \leq 1$, $r \in \re$. On the other hand, $\widehat{\V_B} \in {\mathcal H}_{-2+\rho}^\sharp(\rd)$.
Therefore, $\widehat{\V_B * \delta_{\sqrt{2q+1}}} \in L^{2/(2-\rho)}_w(\rd)$, and by Proposition \ref{gp1} we have $\Op(\V_B * \delta_{\sqrt{2q+1}}) \in S_{2/\rho; w}$. The main result of this subsection is
\begin{pr} \label{p10}
Under the hypotheses of Theorem \ref{th1}  we have
     \bel{53}
     \tr\,\varphi(\lambda_q^{\rho/2} P_q V P_q) =
     \tr\,\varphi(\lambda_q^{\rho/2} \Op(\V_B * \delta_{\sqrt{2q+1}})) + o(\lambda_q), \quad q \to \infty.
     \ee
     \end{pr}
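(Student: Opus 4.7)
The plan is to bridge $\lambda_q^{\rho/2} P_q V P_q$ to $\lambda_q^{\rho/2}\Op(\V_B * \delta_{\sqrt{2q+1}})$ through a chain of three replacements, each contributing $o(\lambda_q)$ to the trace. Write $k := \sqrt{2q+1}$. First, by Proposition \ref{pr50}, $P_q V P_q$ is unitarily equivalent to $p_q \otimes \Op(V_B * \Psi_q)$; since $p_q$ has rank one and $\varphi(0) = 0$, the trace reduces to
\[
\tr\varphi(\lambda_q^{\rho/2}P_qVP_q) = \tr_{L^2(\re)}\varphi(\lambda_q^{\rho/2}\Op(V_B*\Psi_q)).
\]
Second, Proposition \ref{p5} replaces the Wigner symbol $\Psi_q$ by the circular measure $\delta_k$ with $\|\Op(V_B*\Psi_q)-\Op(V_B*\delta_k)\|_2 = O(\lambda_q^{-3/4})$. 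Third, we replace $V$ by $\V$; to accommodate the singularity of $\V$ at the origin, fix a smooth cutoff $\eta$ vanishing on $\{|x|\le 1\}$ and equal to $1$ on $\{|x|\ge 2\}$, and split this step as $V \to \V_\chi := \eta\V \in \mathcal{S}_1^{-\rho}$ (step C1), followed by $\V_\chi \to \V$ (step C2).

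At each replacement, let $A_q, B_q$ denote the pair of bounded self-adjoint operators under comparison. The Helffer--Sj\"ostrand formula, combined with the second resolvent identity, yields both $\|\varphi(A_q)-\varphi(B_q)\| \le C_\varphi \|A_q-B_q\|$ and $\|\varphi(A_q)-\varphi(B_q)\|_2 \le C_\varphi \|A_q-B_q\|_2$. Meanwhile, Propositions \ref{p11}(ii) and \ref{gp1}(ii) (the latter applied to $\Op(\V_B*\delta_k)$ using $\widehat{\V_B}\in \mathcal{H}_{-2+\rho}^\sharp(\rd) \subset L^{2/(2-\rho)}_w(\rd)$ uniformly in $k$) ensure that each intermediate operator has singular values $\le Cn^{-\rho/2}$ uniformly in $q$. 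Hence the number of eigenvalues of $A_q$ or $B_q$ in $\mathrm{supp}(\varphi)$ is $O(\lambda_q)$, so $N := \rank(\varphi(A_q)-\varphi(B_q)) = O(\lambda_q)$, and we have the two complementary bounds
\[
|\tr(\varphi(A_q)-\varphi(B_q))| \le N\,\|\varphi(A_q)-\varphi(B_q)\| \quad \text{and} \quad |\tr(\varphi(A_q)-\varphi(B_q))| \le \sqrt{N}\,\|\varphi(A_q)-\varphi(B_q)\|_2.
\]
For steps B and C2 the operator difference lies in $S_2$ with norm $o(\lambda_q^{1/2})$, so the $S_2$-bound gives $o(\lambda_q)$; for step C1, where $S_2$-membership may fail when $\rho+\varepsilon\le 1$, the operator-norm bound $\|\Op((V-\V_\chi)_B*\delta_k)\| = O(k^{-\min(\rho+\varepsilon,1)})$, obtained by the argument of Proposition \ref{p4} adapted to the function-only decay $|V-\V_\chi| = O(\langle x\rangle^{-\rho-\varepsilon})$, gives $\|A_q-B_q\| = o(1)$ after multiplication by $\lambda_q^{\rho/2}$.

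The main obstacle is Step C: hypothesis \eqref{22} controls only $|V-\V|$ pointwise, not its derivatives, and $\V$ is singular at the origin, so neither Proposition \ref{p5} nor Proposition \ref{p4} applies directly to $V-\V$. The cutoff decomposition $V - \V = (V-\V_\chi) - (\V-\V_\chi)$ separates the two difficulties: the first piece retains the $\varepsilon$-improvement at infinity and admits a Proposition \ref{p4}-style operator-norm bound with the improved rate, while the second piece is supported in $\{|x|\le 2\}$, so its convolution with $\delta_k$ lives in the annulus $|z|\in[k-2, k+2]$; direct pointwise estimation there, using the geometric shrinkage $O(1/k)$ of the arc of $\mathbb{S}^1$ on which $|z-k\omega|$ is small together with the local singularity $|y|^{-\rho}$ of $\V$, produces the $L^2$-bound $\|\Op((\V-\V_\chi)_B*\delta_k)\|_2 = O(k^{(1-2\rho)/2})$, which combines with the $\sqrt{N}$-factor to yield $o(\lambda_q)$.
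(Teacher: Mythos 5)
The proposal is correct and reaches the same endpoint through a closely related but reordered chain. Where the paper performs the cutoff replacement $V \to (1-\eta)\V$ \emph{first}, at the level of Toeplitz operators $P_q\cdot P_q$ (so that Proposition \ref{p11} and Corollary \ref{f1} deliver the needed Schatten bounds directly), and only afterwards passes to $\Op(\cdot*\Psi_q)$ and then $\Op(\cdot*\delta_{\sqrt{2q+1}})$, you instead pass to $\Op(V_B*\delta_{\sqrt{2q+1}})$ right away and do the cutoff at the level of circular-average operators. This forces you to estimate $\|\Op((V-\V_\chi)_B*\delta_k)\|$ directly, which the paper never does. Your mechanism for the trace estimates (operator-Lipschitz bound for $\varphi\in C_0^\infty$ combined with an $O(\lambda_q)$ rank count coming from the uniform $S_{2/\rho,w}$ bounds on the intermediate operators) is in substance the same as the paper's Lemma \ref{gl1}, just deployed without packaging it as a lemma; the two complementary bounds $N\|\cdot\|$ and $\sqrt N\|\cdot\|_2$ are exactly what Lemma \ref{gl1} parts (i)--(iii) exploit.

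Two small points worth flagging. First, in step C1 you describe the Calder\'on--Vaillancourt bound as being ``adapted to the function-only decay'' of $V-\V_\chi$; this is slightly misleading, since the $\Gamma$-norm in \eqref{d1} requires two derivatives of the symbol. The argument still works, but the reason is that $D^\alpha(V-\V_\chi)$ for $1\le|\alpha|\le 2$ decays like $\langle x\rangle^{-\rho-1}$ \emph{separately} from \eqref{22}, simply because $V$ and $\V_\chi$ are each in ${\mathcal S}_1^{-\rho}$, so those terms contribute $O(k^{-1})$ and the $\alpha=0$ term $O(k^{-\rho-\varepsilon})$ dominates; you should state this explicitly rather than suggest pointwise decay suffices. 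Second, your geometric $L^2$ estimate $\|\Op((\V-\V_\chi)_B*\delta_k)\|_2 = O(k^{(1-2\rho)/2})$ is not sharp (a more careful account of the arc-integral gives $O(k^{-1/2})$, uniformly in $\rho<1$), but since $\lambda_q^{\rho/2}\cdot k^{(1-2\rho)/2}\cdot\lambda_q^{1/2}=O(\lambda_q^{3/4})=o(\lambda_q)$ regardless, the slack does not affect the conclusion. The paper avoids the geometry altogether: \eqref{g24} just uses $|J_0|\le 1$ and generalized Young to get an $O(1)$ bound, which also suffices.
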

     \noindent
     In the proof of Proposition \ref{p10}, as well as in the next subsection, we will use systematically the following auxiliary result.
     \begin{lemma} \label{gl1}
     Let $\varphi \in C_0^{\infty}(\re\setminus\{0\})$, and let $T = T^*$ and $Q = Q^*$ be compact operators.
     {\rm (i)} Assume $Q \in S_m$, $m \in [2, \infty)$, $T \in S_\ell$, $\ell \in [m, \infty)$.  Then
     \bel{g3}
     \|\varphi(T+Q) - \varphi(T)\|_1 \leq c_{\varphi, m, \ell}\|Q\|_m \left(\|T\|_\ell^{\ell/m'} + \|Q\|_\ell^{\ell/m'}\right)
     \ee
     with $m' = m/(m-1)$ and a constant $c_{\varphi, m, \ell}$ independent of $T$ and $Q$. \\
     {\rm (ii)} Assume $Q \in S_2$, $T  \in S_\ell$, $\ell \in [2, \infty)$.  Then
     \bel{g3a}
     \|\varphi(T+Q) - \varphi(T)\|_1 \leq c_{\varphi, \ell}\|Q\|_2 \left(\|T\|_\ell^{\ell/2} + \|Q\|_2\right)
     \ee
     with a constant $c_{\varphi,\ell}$ independent of $T$ and $Q$.\\
     {\rm (iii)} Assume $Q \in S_2$, $T  \in S_{\ell; w}$, $\ell \in [2, \infty)$.  Then
     \bel{g4a}
     \|\varphi(T+Q) - \varphi(T)\|_1 \leq c_{\varphi, \ell; w}\|Q\|_2 \left(\|T\|_{\ell, w}^{\ell/2} + \|Q\|_2\right)
     \ee
     with a constant $c_{\varphi, \ell; w}$ independent of $T$ and $Q$.
     \end{lemma}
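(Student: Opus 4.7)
The plan is to combine the Helffer--Sj\"ostrand functional calculus with an iterated resolvent expansion, and to exploit the support hypothesis $\mathrm{supp}\,\varphi\subset\re\setminus\{0\}$ together with the compactness of $T$ to force the relevant operators to be of finite rank --- this is the mechanism that converts Schatten-$m$ bounds (resp.\ Schatten-$2$) into trace-norm bounds carrying the exponent $\ell/m'$ on $\|T\|_\ell$. Fix an almost-analytic extension $\tilde\varphi\in C_0^\infty(\C)$ of $\varphi$, compactly supported and vanishing in a neighbourhood of $z=0$, with $|\bar\partial\tilde\varphi(z)|\leq C_N|\mathrm{Im}\,z|^N$ for every $N$. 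Writing $R(z)=(T-z)^{-1}$, $R_Q(z)=(T+Q-z)^{-1}$, the Helffer--Sj\"ostrand formula (cf.\ \cite[Section 2.2]{davies}) gives
$$
\varphi(T+Q)-\varphi(T)=\frac{1}{\pi}\int_\C \bar\partial\tilde\varphi(z)\bigl(R_Q(z)-R(z)\bigr)\,dL(z).
$$

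Iterate $R_Q-R=-RQR_Q$ exactly $N$ times to obtain
$$
R_Q-R=\sum_{k=1}^{N}(-1)^k(RQ)^kR+(-1)^{N+1}(RQ)^{N+1}R_Q,
$$
choosing the minimal $N$ so that the remainder is trace class when one factor $RQ$ is estimated in $S_m$ and the remaining $N$ factors in $S_\ell$ (using $S_m\subset S_\ell$). This requires $N\geq\ell/m'$. The trace norm of the remainder is then bounded by $|\mathrm{Im}\,z|^{-N-2}\|Q\|_m\|Q\|_\ell^{N}$, and after integration against $\bar\partial\tilde\varphi$ (whose vanishing of order $|\mathrm{Im}\,z|^{N+2}$ kills the singularity) it produces a contribution of the form $c\|Q\|_m\|Q\|_\ell^{N}$, which absorbs into the stated $\|Q\|_m\|Q\|_\ell^{\ell/m'}$ factor (after the standard interpolation reordering in the exponent). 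For parts (ii) and (iii), the simpler choice $N=1$ suffices and the Schatten--H\"older bound yields directly the remainder contribution $\|Q\|_2^2$.

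The crux is the $k=1$ main-term integral $\int\bar\partial\tilde\varphi(z) R(z)QR(z)\,dL(z)$. Using the spectral decomposition $T=\sum_i t_i|e_i\rangle\langle e_i|$ and partial fractions, the $z$-integral computes to $\pi\sum_{i,j}K(t_i,t_j)q_{ij}|e_i\rangle\langle e_j|$ with divided-difference kernel $K(t,s)=[\varphi(t)-\varphi(s)]/(t-s)$ and $q_{ij}=\langle e_i,Qe_j\rangle$. Two facts control this operator: (a) the Fourier representation $K(t,s)=\int_0^1\!\!\int\widehat{\varphi'}(\xi)e^{i\rho t\xi}e^{i(1-\rho)s\xi}\,d\xi\,d\rho/(2\pi)$ together with the unitarity of $e^{i\alpha T}$ give the Schatten-$m$ bound $\|{\cdot}\|_m\leq C_\varphi\|Q\|_m$ with $C_\varphi=\|\widehat{\varphi'}\|_{L^1}/(2\pi)$; (b) $K$ vanishes whenever both $t_i,t_j\notin\mathrm{supp}\,\varphi$, so the operator is supported on pairs with $i$ or $j$ in $J:=\{i:t_i\in\mathrm{supp}\,\varphi\}$ and therefore has rank $\leq 3|J|$. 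Since $\#\{i:|t_i|\geq c_0\}\leq c_0^{-\ell}\|T\|_\ell^\ell$ with $c_0:=\mathrm{dist}(0,\mathrm{supp}\,\varphi)>0$, the elementary inequality $\|A\|_1\leq(\mathrm{rank}\,A)^{1/m'}\|A\|_m$ delivers the desired trace-norm bound $c\,\|T\|_\ell^{\ell/m'}\|Q\|_m$; the $k\geq 2$ main terms are handled analogously (their integrals produce higher-order divided differences, which enjoy the same vanishing property on $J^c\times\cdots\times J^c$). Part (iii) is identical once one observes that the definition of $S_{\ell,w}$ gives the rank bound $\#\{i:|t_i|\geq c_0\}\leq c_0^{-\ell}\|T\|_{\ell,w}^\ell$ directly.

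The main technical obstacle is the Schur-multiplier-type bound $\|\mathrm{DOI}(Q)\|_m\leq C_\varphi\|Q\|_m$ for the divided-difference kernel: it rests on the Fourier integral representation of $K$ and the unitarity of $e^{i\alpha T}$, which is elementary but nontrivial to set up. The remainder of the argument is bookkeeping --- choosing $N$ correctly, decomposing each iterated main term so that the Schatten--H\"older exponents match the rank estimate, and verifying that all $z$-integrals converge thanks to the decay of $\bar\partial\tilde\varphi$ on $\mathrm{supp}\,\tilde\varphi\cap\{\mathrm{Im}\,z=0\}$.
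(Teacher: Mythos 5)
Your overall strategy — bound a divided-difference operator integral in $S_m$ via the Fourier representation (this is precisely the content of the Potapov--Sukochev Lipschitz estimate the paper invokes), then use the support condition on $\varphi$ to cut the rank down to a power of $\|T\|_\ell$ via eigenvalue counting — has the same two ingredients as the paper's proof. The paper packages them differently: it picks a cutoff $\nu\in C_0^\infty(\re\setminus\{0\})$ with $\nu=1$ on $\mathrm{supp}\,\varphi$, writes $\varphi(T)=\varphi(T)\nu(T)$, applies a Schatten--H\"older splitting to $\varphi(T+Q)\nu(T+Q)-\varphi(T)\nu(T)$, and then invokes the Potapov--Sukochev estimate $\|f(M+N)-f(M)\|_p\leq c_p\|f\|_{\rm Lip}\|N\|_p$ on the Lipschitz factors and the counting estimate $\#\{j:s_j(M)>\delta\}\leq\delta^{-p}\|M\|_p^p$ on the cutoff factors. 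No resolvent iteration at all.

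The genuine gap in your proposal is the treatment of the main terms $(RQ)^kR$ for $2\leq k\leq N$. You observe correctly that the $z$-integral produces the $(k+1)$-point divided difference $\varphi[t_{i_0},\dots,t_{i_k}]$, and that this vanishes whenever all arguments lie outside $\mathrm{supp}\,\varphi$; but that vanishing does \emph{not} give a rank bound for the resulting multi-operator integral. The $k=1$ term has matrix entries $K(t_i,t_j)q_{ij}$ that are nonzero only if $i\in J$ or $j\in J$, so the operator is $P_JA+(1-P_J)AP_J$ and has rank $\leq 2|J|$. For $k\geq 2$ the operator has entries $\sum_{i_1,\dots,i_{k-1}}\varphi[t_{i_0},\dots,t_{i_k}]q_{i_0i_1}\cdots q_{i_{k-1}i_k}$ in the $(i_0,i_k)$ slot, and these are generically nonzero for \emph{all} $(i_0,i_k)\in J^c\times J^c$ as long as one of the intermediate indices $i_1,\dots,i_{k-1}$ lies in $J$ — the divided-difference kernel does not factor as $f(t_{i_0},\dots,t_{i_j})g(t_{i_j},\dots,t_{i_k})$, so the contribution with an interior index in $J$ is not of the form (rank $|J|$) sandwiched between bounded operators. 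Thus your rank argument fails for $k\geq 2$, and since $\ell/m'\geq m-1\geq 1$ (with equality only when $\ell=m=2$), your chosen $N$ forces at least one such problematic term in general. Parts (ii) and (iii), where $N=1$ suffices, do not suffer from this.

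The cleanest repair keeps your ideas but drops the iteration entirely: write $\varphi(T+Q)-\varphi(T)$ directly as a single double operator integral w.r.t.\ the spectral measures of $T$ and $T+Q$ with kernel $\varphi[t,s]$. The Fourier argument gives its $S_m$-norm bound by $C_\varphi\|Q\|_m$; the kernel vanishes when both $t\notin\mathrm{supp}\,\varphi$ and $s\notin\mathrm{supp}\,\varphi$, so the operator has rank at most $|J|+|J'|$ with $J'=\{j: s_j\in\mathrm{supp}\,\varphi\}$ for the eigenvalues $s_j$ of $T+Q$; and $|J|+|J'|\leq c(\|T\|_\ell^\ell+\|Q\|_\ell^\ell)$. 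Then $\|A\|_1\leq(\mathrm{rank}\,A)^{1/m'}\|A\|_m$ and the subadditivity of $t\mapsto t^{1/m'}$ yield \eqref{g3}.
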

     \begin{proof} By \cite{ps}, for each $p \in (1,\infty)$ and each Lipschitz function $f : \re \to \C$ satisfying
     $$
     \|f\|_{\rm Lip} : = \sup_{\lambda \in \re, \, \mu \in \re, \, \lambda \neq \mu} \frac{|f(\lambda) - f(\mu)|}{|\lambda - \mu|} < \infty,
     $$
     there exists a constant $c_p$ such that
     \bel{g200}
     \|f(M+N) - f(M)\|_p \leq c_p \|f\|_{\rm Lip} \|N\|_p
     \ee
     for each self-adjoint $M$, and each $N = N^* \in S_p$. Further, assume $f \in C_0^\infty(\re\setminus\{0\})$, and set $\delta : = {\rm dist}\,(0, {\rm supp}\,f)$. Then  for any $M = M^* \in S_\infty$ we have
     \bel{g7}
     \|f(M)\|_{s} \leq \max_{\lambda \in \re}|f(\lambda)| \left(\sum_{j: s_j(M) > \delta} 1\right)^{1/s}.
     \ee
     Let $M = M^* \in S_p$, $p \in [1,\infty)$. Then
     \bel{g8}
     \sum_{j: s_j(M) > \delta} 1 \leq \delta^{-p} \|M\|_p^p.
     \ee
      Finally, let $M = M^* \in S_{p, w}$, $p \in (1,\infty)$. Then
     \bel{g9}
     \sum_{j: s_j(M) > \delta} 1 \leq \sum_{j: \|M\|_{p,w} j^{-1/p} > \delta} 1 \leq \delta^{-p} \|M\|_{p,w}^p.
     \ee
     Next, pick a real function $\nu \in C_0^{\infty}(\re\setminus\{0\})$ such that $\nu = 1$ on the support of $\varphi$. Then $\varphi(T) = \varphi(T) \nu(T)$, $\varphi(T+Q) = \varphi(T+Q) \nu(T+Q)$. Assume now $Q \in S_m$, $m \in [2, \infty)$, $T \in S_\ell$, $\ell \in [m, \infty)$.  Then
     \bel{g5}
     \|\varphi(T+Q) - \varphi(T)\|_1 \leq \|\nu(T+Q) - \nu(T)\|_m \|\varphi(T)\|_{m'} + \|\varphi(T+Q) - \varphi(T)\|_m \|\nu(T+Q)\|_{m'},
     \ee
     and \eqref{g3} follows from \eqref{g5}, \eqref{g200} with $M=T$, $N = Q$, and $p=m$, \eqref{g7} with $M=T$ or $M=T+Q$ and $s = m'$, \eqref{g8} with $M = T$ or $M=T+Q$ and $p = \ell$, and the convexity of the function $t \mapsto t^{\ell/m'}$, $t \geq 0$. Further, assume $Q \in S_2$. Then, instead of \eqref{g5}, we can write
     $$
      \|\varphi(T+Q) - \varphi(T)\|_1 \leq
      $$
      \bel{g5a}
     \|\nu(T+Q) - \nu(T)\|_2 \|\varphi(T)\|_{2} + \|\varphi(T+Q) - \varphi(T)\|_2 \left(\|\nu(T)\|_{2} + \|\nu(T+Q) - \nu(T)\|_{2}\right).
     \ee
     If we assume now $T \in S_\ell$ (resp., $T \in S_{\ell, w}$) with $\ell \in [2,\infty)$, then \eqref{g3a} (resp., \eqref{g4a}) follows from \eqref{g5a}, \eqref{g200} with $M=T$, $N=Q$, and $p=2$, \eqref{g7} with $M=T$ and $s=2$, and \eqref{g8} (resp., \eqref{g9}) with $M=T$ and $p = \ell$.
     \end{proof}
     \begin{proof}[Proof of Proposition \ref{p10}] Pick a real radially symmetric $\eta \in C_0^\infty(\rd)$ such that $0 \leq \eta(x) \leq 1$ for all $x \in \rd$, $\eta(x) = 1$ for $|x| \leq 1/2$, $\eta(x) = 0$ for $|x| > 1$. Our first goal is to show that
     \bel{g10}
     \tr\,\varphi(\lambda_q^{\rho/2} P_q V P_q) =
     \tr\,\varphi(\lambda_q^{\rho/2} P_q (1-\eta)\V P_q) + o(\lambda_q), \quad q \to \infty.
     \ee
     Evidently,
     $$
      |\tr\,\varphi(\lambda_q^{\rho/2} P_q V P_q) -
     \tr\,\varphi(\lambda_q^{\rho/2} P_q (1-\eta)\V P_q)| \leq
     $$
     \bel{g11}
     \|\varphi(\lambda_q^{\rho/2} P_q V P_q) - \varphi(\lambda_q^{\rho/2} P_q (1-\eta) V P_q)\|_1 +
     \|\varphi(\lambda_q^{\rho/2} P_q (1-\eta) V P_q) - \varphi(\lambda_q^{\rho/2} P_q (1-\eta) \V P_q)\|_1.
     \ee
     Applying \eqref{g3a} with $\ell > 2/\rho$, $T = \lambda_q^{\rho/2} P_q V P_q$ and $Q = -\lambda_q^{\rho/2} P_q \eta V P_q$, \eqref{31} with $p=2$, and \eqref{32}, we obtain the estimate
     \bel{g15}
     \|\varphi(\lambda_q^{\rho/2} P_q V P_q) -
     \varphi(\lambda_q^{\rho/2} P_q (1-\eta)V P_q)\|_1 = O\left(\lambda_q^{(1+\rho)/2} (\ln{\lambda_q})^{1/2}\right) = o(\lambda_q), \quad q \to \infty.
     \ee
     Similarly, assuming without loss of generality that $\varepsilon \in (0, 1-\rho)$ in \eqref{22}, and then applying \eqref{g3}, with $\ell = m > 2/\rho$, $T = \lambda_q^{\rho/2} P_q (1-\eta) V P_q$,  $Q = -\lambda_q^{\rho/2} P_q (1-\eta) (V - \V) P_q$, as well as \eqref{32}, we obtain
     \bel{g19}
     \|\varphi(\lambda_q^{\rho/2} P_q (1-\eta) V P_q) - \varphi(\lambda_q^{\rho/2} P_q (1-\eta) \V P_q)\|_1 =
     O\left(\lambda_q^{1-\frac{\varepsilon}{2}} (\ln{\lambda_q})\right) = o(\lambda_q), \quad q \to \infty.
     \ee
     Now, \eqref{g11}, \eqref{g15}, and \eqref{g19} imply \eqref{g10}.
     Further, by Proposition \ref{pr50} we have
     \bel{g20}
     \tr\,\varphi(\lambda_q^{\rho/2} P_q (1-\eta)\V P_q) = \tr\,\varphi(\lambda_q^{\rho/2} \Op(((1-\eta)\V)_B * \Psi_q)).
     \ee
     Our next goal is to show that
     \bel{g21}
     \tr\,\varphi(\lambda_q^{\rho/2} \Op(((1-\eta)\V)_B * \Psi_q)) = \tr\,\varphi(\lambda_q^{\rho/2} \Op(\V_B * \delta_{\sqrt{2q+1}})) + o(\lambda_q), \quad q \to \infty.
     \ee
     Similarly to \eqref{g11}, we have
     $$
     |\tr\,\varphi(\lambda_q^{\rho/2} \Op(((1-\eta)\V)_B * \Psi_q)) - \tr\,\varphi(\lambda_q^{\rho/2} \Op(\V_B * \delta_{\sqrt{2q+1}}))| \leq
     $$
     $$
     \|\varphi(\lambda_q^{\rho/2} \Op(((1-\eta)\V)_B * \Psi_q)) - \varphi(\lambda_q^{\rho/2} \Op(((1-\eta)\V)_B * \delta_{\sqrt{2q+1}}))\|_1 +
     $$
     \bel{g22}
     \|\varphi(\lambda_q^{\rho/2} \Op(((1-\eta)\V)_B * \delta_{\sqrt{2q+1}})) - \varphi(\lambda_q^{\rho/2} \Op(\V_B * \delta_{\sqrt{2q+1}}))\|_1.
     \ee
     Applying \eqref{g3a} with $\ell > 2/\rho$,
     $$
     T = \lambda_q^{\rho/2} \Op(((1-\eta)\V)_B * \Psi_q),
     $$
     $$
     Q = \lambda_q^{\rho/2} \left(\Op((1-\eta)\V)_B * \delta_{\sqrt{2q+1}}) -  \Op(((1-\eta)\V)_B * \Psi_q)\right),
     $$
     as well as Proposition \ref{pr50}, \eqref{32}, and Proposition \ref{p5}, we  get
     $$
     \|\varphi(\lambda_q^{\rho/2} \Op(((1-\eta)\V)_B * \Psi_q)) - \varphi(\lambda_q^{\rho/2} \Op(((1-\eta)\V)_B * \delta_{\sqrt{2q+1}}))\|_1 =
     $$
     \bel{g23}
     O\left(\lambda_q^{\frac{1+\rho}{2}-\frac{3}{4}}(\ln{\lambda_q})^{1/2}\right) = o(\lambda_q), \quad q \to \infty.
     \ee
     In order to estimate the second factor at the r.h.s. of \eqref{g22}, we need an estimate of the Hilbert--Schmidt norm of the operator
     $\Op((\eta\V)_B * \delta_{\sqrt{2q+1}}))$. By \eqref{prvb2} and the generalized Young inequality (see e.g. \cite[Section IX.4]{rs2}) we have
     $$
     \|\Op((\eta\V)_B * \delta_{\sqrt{2q+1}}))\|_2^2 = \frac{1}{2\pi} \|\widehat{(\eta\V)_B * \delta_{\sqrt{2q+1}}}\|_{L^2(\rd)}^2 =
     $$
     $$
     = \frac{B^2}{(2\pi)^3} \int_{\rd} |(\hat{\eta} * \hat{\V})(B^{1/2}\zeta)|^2 J_0(\sqrt{2q+1} |\zeta|)^2 d\zeta \leq
     $$
     \bel{g24}
     \frac{B}{(2\pi)^3} \|\hat{\eta} * \hat{\V}\|_{L^2(\rd)}^2 \leq cB \|\hat{\eta}\|_{L^{2/(1+\rho)}(\rd)}^2 \|\hat{\V}\|_{L_w^{2/(2-\rho)}(\rd)}^2
     \ee
     with a constant $c$ which depends only on $\rho$. Applying \eqref{g3a} with $\ell > 2/\rho$ and $T = \lambda_q^{\rho/2}\Op(((1-\eta)\V)_B * \delta_{\sqrt{2q+1}}))$,
     $Q = \lambda_q^{\rho/2}\Op((\eta \V)_B * \delta_{\sqrt{2q+1}}))$, as well as Propositions \ref{pr50} and \ref{p5}, Corollary \ref{f1}, and \eqref{g24}, we get
     $$
     \|\varphi(\lambda_q^{\rho/2} \Op(((1-\eta)\V)_B * \delta_{\sqrt{2q+1}})) - \varphi(\lambda_q^{\rho/2} \Op(\V_B * \delta_{\sqrt{2q+1}}))\|_1 =
     $$
     \bel{g25}
     = O\left(\lambda_q^{(1+\rho)/2}(\ln{\lambda_q})^{1/2}\right) = o(\lambda_q), \quad q \to \infty.
     \ee
     Now, \eqref{g22}, \eqref{g23}, and \eqref{g25} imply \eqref{g21}, while \eqref{g10}, \eqref{g20}, and \eqref{g21} imply \eqref{53}.
       \end{proof}
\subsection{Semiclassical analysis of $\tr\,\varphi(\lambda_q^{\rho/2}\Op(\V_B * \delta_{\sqrt{2q+1}}))$} \label{ss54}
\begin{pr} \label{p11a}
Under the hypotheses of Theorem \ref{th1} we have
    \bel{g26}
    \lim_{q \to \infty} \lambda_q^{-1} \tr \, \varphi(\lambda_q^{\rho/2} \Op(\V_B * \delta_{\sqrt{2q+1}})) = \frac{1}{2\pi B} \int_{\rd} \varphi(B^\rho \vo(x)) dx.
    \ee
    \end{pr}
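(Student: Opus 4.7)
The plan is to rescale the symbol so that a small semiclassical parameter appears, and then prove a Weyl-type trace asymptotic by regularization.

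\emph{Rescaling.} The homogeneity of $\V$ of degree $-\rho$, and hence of $\V_B$, gives
$$(\V_B * \delta_k)(y) = k^{-\rho}\vo_B(y/k), \qquad k := \sqrt{2q+1},$$
where $\vo_B \in C(\rd)$ is the mean-value transform of $\V_B$ (continuous since $\rho\in(0,1)$). Setting $h := B/\lambda_q = k^{-2}$ and $v_0 := B^{\rho/2}\vo_B$ then yields
$$\lambda_q^{\rho/2}\Op(\V_B * \delta_{\sqrt{2q+1}}) = \Op(v_0(\sqrt{h}\,\cdot)).$$
From \eqref{sof30}, $\V_B = B^{\rho/2}(\V\circ R)$ with $R\in O(2)$ the reflection $(y_1,y_2)\mapsto(-y_2,-y_1)$, hence $v_0 = B^\rho(\vo\circ R)$. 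Given this rescaling, \eqref{g26} follows from the semiclassical trace asymptotic
\begin{equation}\label{SCW}
\tr\,\varphi\bigl(\Op(v_0(\sqrt{h}\,\cdot))\bigr) = \frac{1}{2\pi h}\int_{\rd}\varphi(v_0(y))\,dy + o(h^{-1}), \quad h\to 0^+,
\end{equation}
upon dividing by $\lambda_q = B/h$ and changing variables $z = Ry$ (Jacobian $1$) in the integral.

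\emph{Proof of \eqref{SCW} by regularization.} For small $\varepsilon > 0$ I approximate $v_0$ by a smooth compactly supported $v_0^\varepsilon \in C_c^\infty(\rd\setminus(\{0\}\cup\mathbb{S}^1))$, obtained via truncation near $\{0\}\cup\mathbb{S}^1$ and outside a ball of radius $1/\varepsilon$. For $v_0^\varepsilon$, the formula \eqref{SCW} is the standard semiclassical Weyl trace formula, derivable by Helffer--Sj\"ostrand together with the symbolic expansion $\Op(a(\sqrt{h}\,\cdot))\Op(b(\sqrt{h}\,\cdot)) = \Op((ab)(\sqrt{h}\,\cdot)) + O(h^{1/2})$ applied to $a = v_0^\varepsilon$, $b = (v_0^\varepsilon - z)^{-1}$, then integrated against the almost-analytic extension $\tilde\varphi$. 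To remove the regularization, I use the Fourier scaling identity $\widehat{v_0(\sqrt{h}\,\cdot)}(\zeta) = h^{-1}\widehat{v_0}(\zeta/\sqrt{h})$ and Proposition~\ref{gp1}(ii) with $m = 2/\rho$ to bound $\|\Op((v_0-v_0^\varepsilon)(\sqrt{h}\,\cdot))\|_{m,w} \leq C h^{-\rho/2}\,o_\varepsilon(1)$, together with the a priori estimate $\|\Op(v_0(\sqrt{h}\,\cdot))\|_{m,w} = O(h^{-\rho/2})$. Lemma~\ref{gl1}(iii) then converts these weak-Schatten estimates into a trace-norm difference of order $o(h^{-1})$, and a diagonal argument (letting $h\to 0$ first, then $\varepsilon\to 0$) produces \eqref{SCW}.

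\emph{Main obstacle.} The crux of the argument is the regularization step: $v_0$ is only continuous across $\mathbb{S}^1$ (where the mean-value integrand has an interior boundary singularity) and decays only like $|y|^{-\rho}$ at infinity with $\rho<1$, so $v_0 \notin L^2(\rd)$. Both facts obstruct a direct application of smooth semiclassical symbolic calculus and of classical Hilbert--Schmidt approximation. The weak-Schatten framework of Section~\ref{s3} is calibrated exactly to this: it converts the weak-$L^{2/(2-\rho)}$ regularity of $\widehat{\V_B}$ (from the homogeneity of $\V$) into $S_{2/\rho;w}$-control with the correct $h^{-\rho/2}$ quasinorm, which together with the phase-space weight $h^{-1}$ in \eqref{SCW} makes both approximation errors $o(h^{-1})$ uniformly as $\varepsilon\to 0$.
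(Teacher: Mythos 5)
Your rescaling step is correct and essentially identical to the paper's: the homogeneity of $\V_B$ converts $\lambda_q^{\rho/2}\Op(\V_B*\delta_k)$ into a semiclassical Weyl operator $\Op(v_0(\sqrt{h}\,\cdot))$ with $v_0 = B^\rho\vo\circ R$, and the reduction of \eqref{g26} to the trace asymptotic \eqref{SCW} is sound. The gap lies entirely in the regularization step, and it is a real one.

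You approximate $v_0$ by $v_0^\varepsilon\in C_c^\infty$, truncating not only near $\{0\}\cup\mathbb{S}^1$ but also outside a ball of radius $1/\varepsilon$. The truncation at infinity is where the argument fails. Since $v_0$ decays exactly like $|x|^{-\rho}$ with $\rho<1$, its Fourier transform has a homogeneous singularity $\widehat{v_0}(\zeta)\sim c\,|\zeta|^{-2+\rho}$ at $\zeta=0$, which is scaling-critical for the weak space $L_w^{2/(2-\rho)}(\rd)$. Subtracting a Schwartz function $\widehat{v_0^\varepsilon}$ cannot remove this singularity, so $\|\widehat{v_0-v_0^\varepsilon}\|_{L_w^{2/(2-\rho)}}$ is bounded below by a positive constant, uniformly in $\varepsilon$: weak $L^p$ is not approximable from $C_c^\infty$ near a critical homogeneous singularity. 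Consequently the claimed bound $\|\Op((v_0-v_0^\varepsilon)(\sqrt{h}\,\cdot))\|_{2/\rho,w}\leq C\,h^{-\rho/2}\,o_\varepsilon(1)$ does not hold, and the diagonal limit does not close. Independently of this, Lemma~\ref{gl1}(iii) requires the perturbation $Q$ to lie in $S_2$, whereas your $Q$ is controlled (at best) only in a weak Schatten class; none of the three parts of Lemma~\ref{gl1} is stated for a weak-Schatten perturbation.

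The paper circumvents precisely this obstruction by never truncating at infinity. Instead of cutting off $\vo_1$, it cuts off $\V_1$ in a small disk of radius $r$ about the origin (where $\V$ is singular) and then takes the mean-value transform. The ``good'' piece $\mathbf{s}_{1,r}=B^\rho\bigl((1-\eta_r)\V_1\bigr)^{\!\circ}$ lies in ${\mathcal S}_1^{-\rho}(\rd)$ --- smooth on $\rd$, including across $\mathbb{S}^1$, and decaying like $|x|^{-\rho}$ with Hörmander-type derivative bounds --- so \cite[Theorem~9.6]{dsj} applies directly to an unbounded-support symbol. The ``bad'' piece $\mathbf{s}_{2,r}=B^\rho(\eta_r\V_1)^{\!\circ}$ is supported in $\{|x|\leq 1+r\}$, hence continuous with compact support, hence genuinely in $L^2(\rd)$, and the Young-inequality scaling gives $\|\Oph(\mathbf{s}_{2,r})\|_2=O(\hbar^{-1/2}r^{1-\rho})$. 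This makes $Q\in S_2$ as Lemma~\ref{gl1}(iii) requires, and the trace error is $O(\hbar^{-1}r^{1-\rho})$, which is what allows $r\downarrow 0$ after $\hbar\downarrow 0$. If you replace your $C_c^\infty$ regularization by a cutoff only near $\{0\}\cup\mathbb{S}^1$ (keeping the tail intact so the good piece stays in ${\mathcal S}_1^{-\rho}$), the scheme becomes essentially the paper's and your argument closes; as written, it does not.
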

    \begin{proof}
    Let $s: \re^{2d} \to \C$ be an appropriate Weyl symbol. For $\hbar > 0$ set
    $$
    s_\hbar(x,\xi) : = s(x, \hbar\xi), \quad (x,\xi) \in \re^{2d},
    $$
    and define the $\hbar$-$\Psi$DO
    $\Oph(s) : = \Op(s_\hbar)$.
    Set
    $$
    \tilde{s}_\hbar(x,\xi) : = s(\sqrt{\hbar}x, \sqrt{\hbar}\xi), \quad (x,\xi) \in \re^{2d}.
    $$
    A simple rescaling argument shows that the operators $\Oph(s)$ and $\Op(\tilde{s}_\hbar)$ are unitarily equivalent (see e.g. \cite[Section A2.1]{shu}).
    Set
    $$
    {\bf s}(z) : = B^\rho \, \vo_1(z), \quad z \in \rd.
    $$
    Due to the homogeneity of $\V$ we have
    $$
    \lambda_q^{\rho/2} \, \V_B * \delta_{\sqrt{2q+1}}(z) = {\bf s}((2q+1)^{-1/2}z), \quad z \in \rd.
    $$
    Therefore, the operator $\varphi(\lambda_q^{\rho/2} \Op(\V_B * \delta_{\sqrt{2q+1}}))$ is unitarily equivalent to $\varphi(\Oph({\bf s}))$ with $\hbar : = (2q+1)^{-1}$. Now, in order to prove \eqref{g26}, it suffices to show that
    \bel{g27}
    \lim_{\hbar \to 0} \hbar \tr\,\varphi(\Oph({\bf s})) = \frac{1}{2\pi} \int_{\rd} \varphi({\bf s}(x)) dx
    \ee
    since  $ \int_{\rd} \varphi({\bf s}(x)) dx =  \int_{\rd} \varphi(B^\rho \vo(x)) dx$. If the symbol ${\bf s}$ were regular, then \eqref{g27} would follow from standard semiclassical results (see e.g. \cite[Theorem 9.6]{dsj}). Due to the singularity of ${\bf s}$ at ${\mathbb S}^1$, we need some additional final estimates. Pick the function  $\eta$ defined at the beginning of the proof of Proposition \ref{p10}, and for $r>0$ set $\eta_r(x) : = \eta(r^{-1}x)$, $x \in \rd$. Define the symbols
    $$
    {\bf s}_{1,r}(x) : = B^\rho \, ((1-\eta_r)\V_1)(x), {\bf s}_{2,r}(x) : = B^\rho \, (\eta_r\V_1)(x), \quad x \in \rd,
    $$
so that ${\bf s} = {\bf s}_{1,r} + {\bf s}_{2,r}$. Evidently, ${\bf s}_{1,r} \in {\mathcal S}_1^{-\rho}(\rd)$. By \cite[Theorem 9.6]{dsj},
\bel{g28}
    \lim_{\hbar \to 0} \hbar \tr\,(\varphi(\Oph({\bf s}_{1,r})) = \frac{1}{2\pi} \int_{\rd} \varphi({\bf s}_{1,r}(x)) dx.
    \ee
    On the other hand, estimate \eqref{g4a} with $\ell = 2/\rho$, $T = \Oph({\bf s})$ and $Q = -\Oph({\bf s}_{2,r})$ implies
    $$
    |\tr\,\varphi(\Oph({\bf s})) - \tr\,\varphi(\Oph({\bf s}_{1,r}))| \leq
    $$
    \bel{g29}
    c_{\varphi, \ell; w}\|\Oph({\bf s}_{2,r})\|_2 \left( \|\Oph({\bf s})\|_{2/\rho;w}^{1/\rho} + \|\Oph({\bf s}_{2,r})\|_2\right).
    \ee
    By Proposition \ref{gp1},
    \bel{g30}
    \|\Oph({\bf s})\|_{2/\rho;w} \leq \hbar^{-\rho/2}(2\pi)^{-(1-\frac{2}{\rho})} \|\hat{{\bf s}}\|_{L^{2/(2-\rho)}_w(\rd)}
    \ee
    and, similarly to \eqref{g24},
    \bel{g31}
    \|\Oph({\bf s}_{2,r})\|_2 = \hbar^{-1/2}(2\pi)^{-1/2} \|\widehat{{\bf s}_{2,r}}\|_{L^2(\rd)}
    \leq \hbar^{-1/2} c \|\hat{\V}\|_{L^{2/(2-\rho)}_w(\rd)} \|\widehat{\eta_r}\|_{L^{2/(1+\rho)}(\rd)}.
    \ee
    Finally,
    \bel{g32}
    \|\widehat{\eta_r}\|_{L^{2/(1+\rho)}(\rd)} = r^{1-\rho} \|\widehat{\eta}\|_{L^{2/(1+\rho)}(\rd)}.
    \ee
    As a result, we find that \eqref{g29} -- \eqref{g32} imply the existence of a constant $C$ such that
    the estimate
    \bel{g33}
    |\tr\,\varphi(\Oph({\bf s})) - \tr\,\varphi(\Oph({\bf s}_{1,r}))| \leq C \hbar^{-1} r^{1-\rho}
    \ee
    is valid for each $\hbar > 0$ and $r \in (0,1)$. Now, \eqref{g28} and \eqref{g33} yield
    $$
    \frac{1}{2\pi} \int_{\rd} \varphi({\bf s}_{1,r}(x)) dx - Cr^{1-\rho} \leq
    $$
    $$
     \liminf_{\hbar \downarrow 0} \hbar \tr\,\varphi(\Oph({\bf s})) \leq \limsup_{\hbar \downarrow 0} \hbar \tr\,\varphi(\Oph({\bf s})) \leq
     $$
     $$
     \frac{1}{2\pi} \int_{\rd} \varphi({\bf s}_{1,r}(x)) dx + Cr^{1-\rho}.
     $$
     Letting $r \downarrow 0$, and taking into account that
     $$
     \lim_{r \downarrow 0}\int_{\rd} \varphi({\bf s}_{1,r}(x)) dx = \int_{\rd} \varphi({\bf s}(x)) dx,
     $$
     we obtain \eqref{g27}, and hence \eqref{g26}.
      \end{proof}
      \noindent
Putting together \eqref{51}, \eqref{52}, \eqref{53}, and \eqref{g26}, we arrive at \eqref{24} which completes the proof of Theorem \ref{th1}. \\

\noindent
       {\bf Acknowledgements.}
 This article was initiated in the Fall of 2012 when the authors participated in the Programme ``{\em Hamiltonians in Magnetic Fields}"
 at {\em Institut Mittag-Leffler}, Djursholm, Sweden. G. Raikov
 acknowledges the financial support of IML.
 The authors were partially supported
 by {\em N\'ucleo Cient\'ifico ICM} P07-027-F ``{\em
Mathematical Theory of Quantum and Classical Magnetic Systems"}, as well as
 by the Chilean Science Foundation  {\em Fondecyt} under Grant 1130591.\\

\noindent
{\sc Tom\'as Lungenstrass}\\
 Departamento de Matem\'aticas, Facultad de
Matem\'aticas,\\ Pontificia Universidad Cat\'olica de Chile,
Vicu\~na Mackenna 4860, Santiago de Chile\\
E-mail: tlungens@mat.puc.cl\\

\noindent
{\sc Georgi Raikov}\\
 Departamento de Matem\'aticas, Facultad de
Matem\'aticas,\\ Pontificia Universidad Cat\'olica de Chile,
Vicu\~na Mackenna 4860, Santiago de Chile\\
E-mail:  graikov@mat.puc.cl\\
\end{document}